\documentclass[12pt]{amsart}
\usepackage{amsmath,amsfonts,amssymb,amsthm,amstext,pgf,graphicx,hyperref,verbatim,lmodern,ytableau,textcomp,color,young,youngtab,tikz}
\usepackage[mathscr]{euscript}
\setlength{\oddsidemargin}{0in}
\setlength{\evensidemargin}{0in}
\setlength{\topmargin}{0in}
\setlength{\textwidth}{6.5in}
\setlength{\textheight}{8.5in}
\theoremstyle{plain}
\newtheorem{thm}{Theorem}[section]
\newtheorem{lem}[thm]{Lemma}
\newtheorem{prop}[thm]{Proposition}
\newtheorem{cor}[thm]{Corollary}
\newtheorem{rem}[thm]{Remark}

\newtheorem{exmp}[thm]{Example}

\theoremstyle{definition}
\newtheorem{defn}{Definition}[section]

\DeclareMathOperator{\1}{id}
\DeclareMathOperator{\Ustd}{UStd(\lambda)}
\DeclareMathOperator{\Ustdexample}{UStd}
\DeclareMathOperator{\Lstd}{LStd(\lambda)}
\DeclareMathOperator{\Ustdc}{UStd(\lambda^{\prime})}
\DeclareMathOperator{\std}{Std(\lambda)}

\DeclareMathOperator{\Par}{Par(n)}
\DeclareMathOperator{\Cpar}{CPar(n)}
\DeclareMathOperator{\Ncpar}{NCPar(n)}
\title{}
\sloppy

\begin{document}
\title
{Total Variation Cutoff for the Transpose top-$2$ with random shuffle}
\author{Subhajit Ghosh}
\address[Subhajit Ghosh]{Department of Mathematics, Indian Institute of Science, Bangalore 560 012}
\email{gsubhajit@iisc.ac.in}
\keywords{Random walk, Alternating Group, Mixing time, Cutoff, Jucys-Murphy elements}
\subjclass[2010]{60J10, 60B15, 60C05.}

\begin{abstract}
In this paper, we investigate the properties of a random walk on the alternating group $A_n$ generated by $3$-cycles of the form $(i,n-1,n)$ and $(i,n,n-1)$. We call this the transpose top-$2$ with random shuffle. We find the spectrum of the transition matrix of this shuffle. We show that the mixing time is of order $\left(n-\frac{3}{2}\right)\log n$ and prove that there is a total variation cutoff for this shuffle.
\end{abstract}
\maketitle
\section{Introduction}\label{intro}
The convergence of random walks on finite groups is a well-studied topic in probability theory. Under certain natural conditions, a random walk converges to a unique stationary distribution. The topic of interest, in this case, is the mixing time i.e., the minimum number of steps the random walk takes to get close to its stationary distribution in the sense of total variation distance. To understand the convergence of random walks, it is helpful to know the eigenvalues and eigenvectors of the transition matrix. There is a large body of literature dealing with the convergence of random walks on the symmetric group. Random walks on the symmetric group can be practically described by shuffling of a deck of cards. Arrangements of this deck can be described by elements of the symmetric group. There are various types of shuffling algorithms of a deck of $n$-cards labelled from $1$ to $n$. Shuffling problems on $S_n$ can be generalised to other finite groups. One can take a look at \cite{FOW,Diaconis Note1,Diaconis Note2,Saloff-Coste Note,DS,LP,RS} for various shuffles.

This work is mainly motivated by the work of Flatto, Odlyzko and Wales \cite{FOW}, where they studied the \emph{transpose top with random shuffle on $S_n$} (interchanging the top card with a random card). The eigenvalues for the transition matrix of the transpose top with random shuffle were obtained from the \emph{Jucys-Murphy} elements for the symmetric group \cite{VO}. We consider an analogous variant called the \emph{transpose top-$2$ with random shuffle} on the alternating group. In this work, we will prove total variation cutoff for transpose top-$2$ with random shuffle on $A_n$. The transpose top-$2$ with random shuffle on $A_n$ is a random walk on $A_n$ driven by a probability measure $P$ on $A_n$ defined as follows:
\begin{equation}\label{def of P}
P(\pi)=
\begin{cases}
\frac{1}{2n-3},\text{ if }\pi=\1,\\
\frac{1}{2n-3},\text{ if }\pi=(i,n-1,n)\text{ for }i\in\{1,\dots,n-2\},\\
\frac{1}{2n-3},\text{ if }\pi=(i,n,n-1)\text{ for }i\in\{1,\dots,n-2\},\\
0,\text{ otherwise}.
\end{cases}
\end{equation}
The distribution after $k$ transitions is given by $P^{*k}$ (convolution of $P$ with itself $k$ times, details explanation will be given later in this section). Before stating the main result of this paper we first define the \emph{total variation distance} between two probability measures.
\begin{defn}
Let $\mu$ and $\nu$ be two probability measures on $\Omega$. The \emph{total variation distance} between $\mu$ and $\nu$ is defined to be \[||\mu-\nu||_{\text{TV}}:=\sup_{A\subset\Omega}|\mu(A)-\nu(A)|.\]
\end{defn}
The total variation distance between $\mu$ and $\nu$ can also be written as $||\mu-\nu||_{\text{TV}}=\frac{1}{2}\sum\limits_{x\in\Omega}|\mu(x)-\nu(x)|$ (see \cite[p. 48, proposition 4.2]{LPW}).
\begin{thm}\label{thm:UB for Alt}
Let $U_{A_n}$ denote the uniform distribution on $A_n$. For the transpose top-$2$ with random shuffle on $A_n$ driven by $P$, we have the following:
\begin{enumerate}
\item $||P^{*k}-U_{A_n}||_{\text{TV}}<\frac{1}{\sqrt{2}}e^{-c}+o(1)$, for $k\geq(n-\frac{3}{2})(\log n+c)$ and $c>0$.
\item $\lim\limits_{n\rightarrow\infty}||P^{*k_n}-U_{A_n}||_{\text{TV}}=0$, for any $\epsilon\in (0,1)$ and $k_n=\lfloor(1+\epsilon)(n-\frac{3}{2})\log n\rfloor.$
\end{enumerate}
\end{thm}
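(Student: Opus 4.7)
The plan is to apply the Diaconis-Shahshahani upper bound lemma on $A_n$ after identifying the spectrum of $\widehat{P}(\rho)=\sum_\pi P(\pi)\rho(\pi)$ for each irreducible representation $\rho$ of $A_n$. Since $\widehat{P}(\rho)=\frac{1}{2n-3}(I+\rho(M))$ with
\[M:=\sum_{i=1}^{n-2}\bigl[(i,n-1,n)+(i,n,n-1)\bigr],\]
the spectrum of $\widehat{P}$ is determined by the spectrum of $\rho(M)$. Direct multiplication in $\mathbb{C}[S_n]$ yields the clean identity
\[M=J_{n-1}\,s+s\,J_{n-1},\qquad s:=(n-1,n),\]
where $J_{n-1}=\sum_{k=1}^{n-2}(k,n-1)$ is the $(n-1)$-th Jucys-Murphy element. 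On the Young (seminormal) basis $\{v_T\}$ of an $S_n$-irreducible $V_\lambda$, $J_{n-1}$ acts diagonally with eigenvalue $c_{n-1}(T)$ (content of the box containing $n-1$), and $s$ acts by the $2\times 2$ blocks of Young's orthogonal form. A brief case analysis on the relative position of $n-1,n$ in $T$ (same row, same column, or generic) then shows that the eigenvalues of $\widehat{P}(\rho_\lambda)$ are
\[\pm\,\frac{c_{n-1}(T)+c_n(T)}{2n-3},\qquad T\in\mathrm{Std}(\lambda),\]
with appropriate multiplicities (in the generic case the $2\times 2$ block yields both signs; in the same-row and same-column cases only the $+$ and the $-$ sign appear, respectively).

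To pass from $S_n$ to $A_n$: for non-self-conjugate $\lambda$, $\rho_\lambda\!\downarrow_{A_n}$ is irreducible of dimension $d_\lambda$ and carries the same $\widehat{P}$-spectrum; for $\lambda=\lambda'$ it splits as $\rho_\lambda^+\oplus\rho_\lambda^-$ of dimensions $d_\lambda/2$, with the union of the $\widehat{P}$-spectra equal to that of $\rho_\lambda$ (which is all that is needed). Because $P(\pi)=P(\pi^{-1})$, each $\widehat{P}(\rho)$ is Hermitian, so the upper bound lemma gives
\[4\|P^{*k}-U_{A_n}\|_{\mathrm{TV}}^2\;\leq\;\sum_{\substack{\lambda\vdash n\\\lambda\notin\{(n),(1^n)\}}}d_\lambda\sum_{T\in\mathrm{Std}(\lambda)}\Bigl(\tfrac{|c_{n-1}(T)+c_n(T)|}{2n-3}\Bigr)^{2k},\]
the sum over all $\lambda$ being a valid (slightly loose, by a factor of $2$ on the non-self-conjugate shapes) upper bound.

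The main estimate groups partitions by $j:=n-\lambda_1$. The conjugation symmetry $\lambda\leftrightarrow\lambda'$ negates contents but preserves $|c_{n-1}+c_n|$, so it suffices to handle $\lambda_1\geq\lambda_1'$ and double. For such $\lambda$ the extremal bound $|c_{n-1}(T)+c_n(T)|\leq 2\lambda_1-3=2(n-j)-3$ gives a per-term factor
\[\Bigl(1-\tfrac{2j}{2n-3}\Bigr)^{2k}\leq n^{-2j}e^{-2jc}\qquad\text{when}\qquad k=(n-\tfrac{3}{2})(\log n+c).\]
Combined with the combinatorial estimate $\sum_{\lambda:\lambda_1=n-j}(f^\lambda)^2\lesssim\binom{n}{j}^2 j!\sim n^{2j}/j!$ (using $f^\lambda\leq\binom{n}{j}f^\mu$ for $\lambda=(n-j,\mu)$, $\mu\vdash j$, together with $\sum_{\mu\vdash j}(f^\mu)^2=j!$), the outer sum collapses to $2\sum_{j\geq 1}e^{-2jc}/j!=2(e^{e^{-2c}}-1)\leq 2e^{-2c}(1+o(1))$. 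This yields $4\|P^{*k}-U_{A_n}\|_{\mathrm{TV}}^2\leq 2e^{-2c}+o(1)$, which is part~(1). Part~(2) is immediate: writing $k_n\geq(n-\tfrac32)(\log n+c_n)$ with $c_n=\epsilon\log n-O(1)\to\infty$ makes the right-hand side tend to $0$.

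The main technical obstacle is the combinatorial control of $\sum_{\lambda:\lambda_1=n-j}(f^\lambda)^2$ uniformly in $j$, with explicit error terms and with the dominant $j=1$ contribution from $\lambda=(n-1,1)$ (and its conjugate) tracked precisely enough to yield the announced prefactor $1/\sqrt{2}$ rather than a larger constant; one must also verify that the tail contribution from $\lambda$ with very small $\lambda_1$ (i.e.\ $\lambda_1\sim\sqrt{n}$) is negligible, which is handled by the $1/j!$ decay. The algebraic identity $M=\{J_{n-1},(n-1,n)\}$ is the conceptual heart of the proof; once this is in hand, the spectral analysis via Young's orthogonal form and the subsequent asymptotic estimates are standard.
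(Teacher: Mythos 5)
Your overall strategy is the same as the paper's: compute the spectrum of $\widehat{P}$ via a Jucys--Murphy identity and then run the Diaconis--Shahshahani upper bound lemma with the estimate $d_\lambda\leq\binom{n}{j}d_\mu$ for $j=n-\lambda_1$. Your identity $M=J_{n-1}s+sJ_{n-1}$ is correct and is equivalent to Lemma \ref{lem: relation of P and JM-elements}, which writes $\mathrm{id}+M=(n,n-1)(X_{n-1}+X_n)=t_{n-1}(Y_{n-1}+Y_n)$ in terms of Ruff's Jucys--Murphy elements for $A_n$; your case analysis on Young's orthogonal form reproduces exactly the eigenvalues $\pm\frac{c_{n-1}(T)+c_n(T)}{2n-3}$ of Theorems \ref{thm:eigenvalues non-conj case}--\ref{thm:eigenvalues conj case}. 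Your handling of the tail by the $1/j!$ decay alone is a mild streamlining of the paper's split at $\lambda_1\leq e^{-1/2}(n-\frac32)+\frac32$ with the residual term $n!e^{-k}$, and it works; your restriction to $\lambda_1\geq\lambda_1'$ before invoking $|c_{n-1}(T)+c_n(T)|\leq 2\lambda_1-3$ is in fact slightly more careful than the paper's statement of that inequality.

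The one genuine gap concerns the constant $\frac{1}{\sqrt2}$. Your form of the upper bound lemma sums over all partitions $\lambda\neq(n),(1^n)$ and, as you yourself note, thereby counts each irreducible representation of $A_n$ twice; your chain of estimates therefore ends at $4\|P^{*k}-U_{A_n}\|_{\mathrm{TV}}^2\leq 2\left(e^{e^{-2c}}-1\right)+o(1)$. The asserted simplification $2\left(e^{e^{-2c}}-1\right)\leq 2e^{-2c}(1+o(1))$ is false for fixed $c>0$: the discrepancy $e^{e^{-2c}}-1-e^{-2c}=\tfrac12e^{-4c}+\cdots$ is a positive constant in $n$, not $o(1)$. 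As written, the argument yields only $\|P^{*k}-U_{A_n}\|_{\mathrm{TV}}<e^{-c}+o(1)$ (via $e^x-1\leq 2x$ on $[0,1]$), which is weaker than part (1) by a factor of $\sqrt2$. The fix is precisely the factor of $2$ you flagged but did not cash in: the correct upper bound lemma for $A_n$ carries one term per irreducible representation, so rewriting the sum over all partitions requires a prefactor $\frac12$ (for non-self-conjugate $\lambda$ the pair $\{\lambda,\lambda'\}$ contributes a single irreducible of dimension $f^\lambda$, and for self-conjugate $\lambda$ the two constituents each have dimension $f^\lambda/2$, which is how the paper arrives at \eqref{eq:UB1.0}). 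With that $\frac12$ in place one gets $4\|\cdot\|_{\mathrm{TV}}^2\leq e^{e^{-2c}}-1+o(1)\leq 2e^{-2c}+o(1)$ and hence the prefactor $\frac{1}{\sqrt2}$. Part (2) is insensitive to this constant and your argument for it is fine.
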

The proof of this theorem will be given in Section \ref{sec:upper bound}. We first review some concepts and terminologies before discussing the objective of this paper, which will be frequently used in this paper.
\subsection{Preliminaries}
Let $G$ be a finite group. A \emph{linear representation} $\rho$ of $G$ is a homomorphism from $G$ to $GL(V)$, where $V$ is a finite-dimensional vector space over $\mathbb{C}$ and $GL(V)$ is the set of all invertible linear maps from $V$ to itself. Let $\mathbb{C}[G]$ be the set of all formal linear combinations of the elements of $G$ with complex coefficients. In particular, if we take $V=\mathbb{C}[G]$, then the \emph{right regular representation} $R:G\longrightarrow GL(V)$ is defined by $g\mapsto\left(\sum_{h\in G}C_hh\mapsto\sum_{h\in G}C_hhg\right),\;C_h\in\mathbb{C}$. i.e., $R(g)$ is an invertible matrix over $\mathbb{C}$ of order $|G|\times|G|$. The \emph{dimension} $d_{\rho}$ of the representation is defined to be the dimension of the vector space $V$. The \emph{character} $\chi^{\rho}(g)$ (also denoted by $\psi^{\rho}(g)$) of this representation is defined to be the trace of the matrix $\rho(g)$. The representation $\rho$ is said to be \emph{irreducible} if there does not exist any nontrivial proper subspace $W$ of $V$ such that $\rho(g)\left(W\right)\subset W$ for all $g$ in $G$ (i.e., $W$ is invariant under $\rho(g),\;\text{for all}\; g\in G$). Two representations $\rho_1:G\longrightarrow GL(V_1)$ and $\rho_2:G\longrightarrow GL(V_2)$ of $G$ are said to be \emph{isomorphic} if there exists an invertible linear transformation $T:V_1\longrightarrow V_2$ such that $T\circ\rho_1(g)=\rho_2(g)\circ T$ for all $g\in G$. 
We define the \emph{restriction} $\rho\downarrow^G_H$ of $\rho$ to a subgroup $H$ of $G$ by $\rho\downarrow^G_H(h)=\rho(h)$ for all $h\in H$. If $\chi^{\rho}$ is the character of $\rho$, then the character of the restriction 
$\rho\downarrow^G_H$ is denoted by $\chi^\rho\downarrow^G_H$. We will state some results from representation theory of finite groups without proof for more details see \cite{Sagan,Serre,Amri}.

Let $G$ be a finite group and $\;p,\;q$ be two probability measures on $G$. The \emph{convolution $p*q$ of $p$ and $q$} is defined by $(p*q)(x):=\sum_{y\in G}p(xy^{-1})q(y)$. Given a probability measure $p$ on $G$, we define the \emph{Fourier transform} $\widehat{p}$ of $p$ at the representation $\rho$ by the matrix $\sum_{x\in G}p(x)\rho(x)$. One can easily check that $\widehat{(p*q)}(\rho)=\widehat{p}(\rho)\widehat{q}(\rho)$ for two probability measures $p,\;q$ on $G$. In particular, for the right regular representation $R$, the matrix $\widehat{p}(R)$ can be thought of as the action of the group algebra element $\sum_{g\in G}p(g)g$ on $\mathbb{C}[G]$ from the right. 

A \emph{random walk on a finite group $G$ driven by a probability measure $p$} is a Markov chain with state space $G$ and transition probabilities $M_p(x,y)=\;p(x^{-1}y)$, $x,y\in G$. In terms of the Fourier transform defined above, the transition matrix $M_p$ is the transpose of $\widehat{p}(R)$. Also, the distribution after $k^{th}$ transition will be $p^{*k}$ (convolution of $p$ with itself $k$ times) i.e., the probability of getting into state $y$ from state $x$ through $k$ transitions is $p^{*k}(x^{-1}y)$. A Markov chain (discrete time, finite state space) is said to be \emph{irreducible} if it is possible for the chain to reach any state starting from any state using only transitions of positive probability. Therefore a random walk on a finite group $G$ driven by a probability measure $p$ is irreducible if for any two elements $x,y\in G$, there exists $t$ (depending on $x$ and $y$) such that $p^{*t}(x^{-1}y)>0$. We note that \emph{the random walk is irreducible if the support of $p$ generates $G$}. To see this, let $\Gamma$ be the support of $p$. Then from the hypothesis $\Gamma$ generates $G$. Therefore given any two arbitrary elements $x,y\in G$, $x^{-1}y$ can be written as $\gamma_1\gamma_2\dots\gamma_t$ for $\gamma_1,\gamma_2,\dots,\gamma_t\in\Gamma$. Thus $p^{*t}(x^{-1}y)\geq p(\gamma_1)p(\gamma_2)\dots p(\gamma_t)>0$ and the assertion has been proved. A probability distribution $\Pi$ is said to be a \emph{stationary distribution} of a Markov chain with transition matrix $M$ if $\Pi M=\Pi$ (i.e., if the initial distribution for the chain is $\Pi$, then the distribution after one transition, and hence after any number of transitions, will remain $\Pi$). Any irreducible Markov chain possesses a unique stationary distribution. In case of an irreducible random walk on a finite group $G$ driven by a probability measure $p$, \emph{the stationary distribution happens to be the uniform distribution on $G$} (since $\sum_{x\in G}M_p(x,y)=\sum_{x\in G}p(x^{-1}y)=\sum_{z\in G}p(z)=1,\;z=x^{-1}y$ for all $y\in G$). We note that the irreducible random walk on $G$ driven by $p$ is \emph{time reversible} if and only if $M_p(x,y)=M_p(y,x)$ for all $x,y\in G$ i.e., if and only if $p(x)=p(x^{-1})$ for all $x$ in $G$. Given a discrete time Markov chain with finite state space, we define the \emph{period} of a state $x$ by the greatest common divisor of the set of all times when it is possible for the chain to return to the starting state $x$. The period of all the states of an irreducible Markov chain are same (see \cite[p. 8, lemma 1.6]{LPW}). An irreducible Markov chain is said to be \emph{aperiodic} if the common period for all its states is $1$.

Let $G$ be a finite group. From now on, we denote the uniform distribution on $G$ by $U_G$. We have just seen that the unique stationary measure for an irreducible random walk on a finite group $G$ driven by a probability measure $p$ is the uniform measure on $G$. Now if the random walk is aperiodic, then the distribution after $k^{\text{th}}$ transition converges to $U_{G}$ in total variation distance as $k\rightarrow\infty$. We note that, given an irreducible and aperiodic random walk on a finite group $G$ driven by some probability measure $p$, the total variation distance $||p^{*k}-U_{G}||_{TV}$ decreases as $k$ increases.

We now define the total variation cutoff phenomenon. Let $\{G_n\}_{0}^{\infty}$ be a sequence of finite groups. 
For each $n\geq 0$, consider the irreducible and aperiodic random walk on $G_n$ driven by some probability measure $p_n$ on $G_n$. We say that the \emph{total variation cutoff phenomenon} holds for the family $\{(G_n,p_n)\}_0^{\infty}$ if there exists a sequence $\{\tau_n\}_0^{\infty}$ of positive reals such that the following hold:
\begin{enumerate}
\item $\lim\limits_{n\rightarrow\infty}\tau_n=\infty,$
\item For any $\epsilon\in (0,1)$ and $k_n=\lfloor(1+\epsilon)\tau_n\rfloor$, $\lim\limits_{n\rightarrow\infty}||p_n^{*k_{n}}-U_{G_n}||_{TV}=0$ and
\item For any $\epsilon\in (0,1)$ and $k_n=\lfloor(1-\epsilon)\tau_n\rfloor$, $\lim\limits_{n\rightarrow\infty}||p_n^{*k_{n}}-U_{G_n}||_{TV}=1$.
\end{enumerate}
Here $\lfloor x\rfloor$ denotes the floor  of $x$ (the largest integer less than or equal to $x$). Informally, we will say that $\{(G_n,p_n)\}_0^{\infty}$ has a total variation cutoff at time $\tau_n$. The cutoff phenomenon depends on the multiplicity of the second largest eigenvalue of the transition matrix, for more details, see \cite{Cut-off Diaconis}.
Informally, this says that there exist real numbers $\tau_n$ and neighbourhoods $I_n$ of $\tau_n$ such that $||p_n^{*k_{n}}-U_{G_n}||_{TV}$ decreases drastically to a very small positive quantity whenever $k_n$ lies in $I_n$, for large $n$. 

\subsection{Transpose Top-$2$ With Random Shuffle on $A_n$}
Let $S_n$ be the set of all permutations of the numbers $ 1,2,\dots,n $. The set $S_n$ forms a group under the multiplication of permutations, known as \emph{symmetric group}. A permutation in $S_n$ is said to be an \emph{even permutation} if it can be expressed as a product of even number of transpositions (not necessarily disjoint). The set of all even permutations in $S_n$ forms a subgroup of the symmetric group, known as the \emph{alternating group} and is denoted by $A_n$. Now we will define the transpose top-$2$ with random shuffle on $A_n$. Suppose we have a deck of cards labelled from $1$ to $n$ such that the arrangement of the deck is a permutation in $A_n$. Then the \emph{transpose top-$2$ with random shuffle on $A_n$} is a lazy variant of the following: First transpose the top two cards, then choose one of them and interchange it with a card randomly chosen from the remaining $(n-2)$ cards. More formally, this can be described as a random walk on $A_n$ driven by $P$. i.e., any permutation in $A_n$ can either go to itself or be multiplied on the right by a $3$-cycle of the form $(i,n-1,n)$ or $(i,n,n-1)$  with probability $\frac{1}{2n-3}$.
\begin{prop}\label{irreducibility and aperiodicity}
The Markov chain formulated in \eqref{def of P} from the transpose top-$2$ with random shuffle on $A_n$ is irreducible and aperiodic.
\end{prop}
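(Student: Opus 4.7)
The plan is to verify the two conditions separately; aperiodicity is immediate, while irreducibility reduces to a short generating-set computation.

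For aperiodicity, note that $P(\1) = \frac{1}{2n-3} > 0$, so for every $\pi \in A_n$ the transition probability $M_P(\pi, \pi) = P(\pi^{-1}\pi) = P(\1)$ is positive. Thus every state has a self-loop, the period of every state is $1$, and the chain is aperiodic (regardless of whether irreducibility is yet in hand).

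For irreducibility, the criterion recalled in the preliminaries reduces the claim to showing that the support
\[
\Gamma = \{\1\} \cup \{(i, n-1, n), (i, n, n-1) : 1 \le i \le n-2\}
\]
of $P$ generates $A_n$ as a group. Every element of $\Gamma$ is an even permutation, so $\langle \Gamma \rangle \subseteq A_n$. For the reverse inclusion I invoke the classical fact that $A_n$ is generated by its $3$-cycles, and show that every $3$-cycle lies in $\langle \Gamma \rangle$.

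The main computational step is a direct check of a product of the form $(i, n-1, n)\,(j, n, n-1)$, which (depending on the paper's multiplication convention) equals either $(i, j, n)$ or $(i, j, n-1)$, obtained simply by tracking the images of the four points $\{i, j, n-1, n\}$. This exhibits every $3$-cycle of the form $(i, j, n)$ with distinct $i, j \in \{1, \dots, n-2\}$ (or, in the other convention, every $(i, j, n-1)$) as an element of $\langle \Gamma \rangle$. Together with the cycles $(i, n-1, n) \in \Gamma$ themselves, all $3$-cycles whose support meets $\{n-1, n\}$ are realised. Any remaining $3$-cycle $(a, b, c)$ with $a, b, c \in \{1, \dots, n-2\}$ is then obtained from two of these via a further product such as $(a, b, n)(b, c, n) = (a, b, c)$, or equivalently by a conjugation inside $\langle \Gamma \rangle$ that swaps the roles of $n$ and $n-1$. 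The only mild obstacle is bookkeeping---aligning the multiplication convention and checking that the family of products is exhaustive---but no deep structural argument is required, so $\langle \Gamma \rangle = A_n$ and irreducibility follows.
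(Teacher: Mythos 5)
Your proposal is correct and follows essentially the same route as the paper: aperiodicity from the positive holding probability $P(\1)>0$, and irreducibility by showing the support generates $A_n$ via the fact that $3$-cycles generate $A_n$, with each $3$-cycle written as an explicit product of the generators (the paper gives the products directly, e.g.\ $(a,b,n-1)=(b,n,n-1)(a,n-1,n)(b,n-1,n)$, while you build up through the intermediate cycles $(i,j,n)$ first --- a cosmetic difference). Just make sure the final bookkeeping is done in the paper's multiplication convention, since identities like $(a,b,n)(b,c,n)=(a,b,c)$ hold in one composition order but must be rearranged in the other.
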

\begin{proof}
We know that the $3$-cycles generate $A_n$. Let $a,b,c$ be any three distinct integers from $\{1,2,\dots,n\}$. If none of $a,b,c$ is $(n-1)$ or $n$, we have, 
\begin{align*}
(a,b,c)&=(c,n,n-1)(a,b,n-1)(c,n-1,n)\\
&=(c,n,n-1)(b,n,n-1)(a,n-1,n)(b,n-1,n)(c,n-1,n).
\end{align*}
If one of $a,b,c$ is $(n-1)$ or $n$, without loss of any generality we may assume $c$ is either $(n-1)$ or $n$ and we have the following,
\begin{align*}
&\quad\;(a,b,n-1)=(b,n,n-1)(a,n-1,n)(b,n-1,n), \text{ and }\\
&\quad\;(a,b,n)=(b,n-1,n)(a,n,n-1)(b,n,n-1).
\end{align*}
If any two of $a,b,c$ are $(n-1)$ and $n$, then $(a,b,c)$ takes the form $(\;\cdot\;,n-1,n)$ or $(\;\cdot\;,n,n-1)$. Therefore the support of the measure $P$ generates $A_n$ and hence the chain is irreducible. Given any $\pi\in A_n$, the set of all times when it is possible for the chain to return to the starting state $\pi$ contains the integer $1$ ($\because P(\1)\neq 0$). Therefore the period of the state $\pi$ is $1$ and hence from irreducibility all the states of this chain have period $1$. Thus this chain is aperiodic.
\end{proof}
Proposition \ref{irreducibility and aperiodicity} gives an affirmative answer to the question of existence and uniqueness of stationary distribution. It also says that after a large number of transitions the distribution of the chain behaves like the stationary distribution. We know, in the case of an irreducible random walk on a finite group driven by some probability measure, that the unique stationary distribution is the uniform distribution on that group. More precisely, the distribution after $k^{\text{th}}$ transition will converge to $U_{A_n}$ as $k\rightarrow\infty$. 

In Section \ref{sec:representation} we will find the spectrum of $\widehat{P}(R)$. We will prove Theorem \ref{thm:UB for Alt} in Section \ref{sec:upper bound}. In Section \ref{sec:lower bound}, we will find a lower bound of $||P^{*k}-U_{A_n}||_\text{{TV}}$ for $k=(n-\frac{3}{2})(\log n+c),\;c\ll 0$ (large negative). At the end of this paper, we will show that the cutoff occurs at $(n-\frac{3}{2})\log n$. 

\subsection*{Acknowledgement} I would like to thank my advisor Arvind Ayyer for fruitful conversations and suggestions in the preparation of this paper. I would also like to thank Amritanshu Prasad, Arvind Ayyer and Pooja Singla for proposing the problem. I am also thankful to all the reviewers for the careful reading of the manuscript and their valuable comments. I would like to acknowledge support in part by a UGC Centre for Advanced Study grant.
\section{Spectrum of $\widehat{P}(R)$}\label{sec:representation}
Let $\mathcal{P}=\1+\sum\limits_{i=1}^{n-2}\left((i,n-1,n)+(i,n,n-1)\right)\in \mathbb{C}[A_n]$. Recall that $\widehat{P}(R)$ is the Fourier transform of the probability measure $P$ at right regular representation $R$ of $A_n$, which can be written as $\widehat{P}(R)=\frac{1}{2n-3}\mathcal{P}$. Here we consider the action of the operator $\mathcal{P}$ on $\mathbb{C}[A_n]$ by multiplication on the right. To obtain the eigenvalues of $\mathcal{P}$, we will use the representation theory of $A_n$. We now define the terminology that we will need. Most of the notation here is borrowed from Ruff \cite{Ruff}.

Let $V$ be an irreducible representation of $S_n$. The restriction of $V$ to $S_{n-1}$ has a multiplicity-free decomposition into irreducibles 
\cite[Theorem 2.8.3]{Sagan}. For example taking $n=15$ and $V=S^{(5,4,4,2)}$, the restriction of $V$ to $S_{14}$ is given by the following:
\[S^{(5,4,4,2)}\downarrow_{S_{14}}^{S_{15}}=S^{(4,4,4,2)}\oplus S^{(5,4,3,2)}\oplus S^{(5,4,4,1)}.\]
Again, restriction of each of these irreducibles to $S_{n-2}$ has a multiplicity-free decomposition into irreducible representations of $S_{n-2}$. Iterating this, we get a canonical decomposition of $V$ into irreducible representations of $S_1$ i.e., one-dimensional subspaces \cite[Theorem 2.9]{VO}. Thus there is a canonical basis of $V$. This basis is named the \emph{Gelfand-Tsetlin} basis (or GZ-basis) of $V$. 
Any vector from this Gelfand-Tsetlin basis is uniquely (up to scalar factor) determined by the eigenvalues of the \emph{Jucys-Murphy elements for $S_n$} on this vector.
The Jucys-Murphy elements $X_i=\sum_{1\leq j<i}(j,i)\in\mathbb{C}[S_n],\;1\leq i\leq n$ for $S_n$ act semisimply on the Gelfand-Tsetlin basis of the irreducible representations of $S_n$ \cite{VO}. We now define the Jucys-Murphy elements for $A_n$ and establish its connection to $\mathcal{P}$.
\begin{defn}\cite{Ruff}\label{df: Jucys-Murphy}
 The \emph{Jucys-Murphy elements} $Y_1,\dots,Y_n\in\mathbb{C}[A_n]$ are defined by $Y_1=0,\;Y_2=\1\;\text{ and }Y_i=(1,2)X_i\text{ for }i\geq 3$, where $X_i\text{ for } 1\leq i\leq n$ are the usual Jucys-Murphy elements for $S_n$.
\end{defn}
 Let us denote $s_i=(i,i+1)$ for $1\leq i<n$. Then $\{s_1,\dots,s_{n-1}\}$ is a set of generators of the symmetric group. $A_n$ is generated by $t_2,\dots,t_{n-1}$, where $t_i=(1,2)s_i$ for $i\in\{2,\dots,n-1\}$. As the generators $s_1,\dots,s_{n-1}$ of the symmetric group satisfy \[s_iX_j=X_js_i,\;s_iX_i=X_{i+1}s_i-\1\text{ for all }1\leq i<n\text{ with }|i-j|>1,\] we have the following:
\[t_iY_i=Y_{i+1}t_i-\1\;\text{ for all }3\leq i<n.\]
\begin{lem}\label{lem: relation of P and JM-elements}
$\mathcal{P}=\1$ if $n=2,\;\mathcal{P}=\1+Y_3$ if $n=3$ and for $n>3$, we have
\[\mathcal{P}=t_{n-1}\left(Y_n+Y_{n-1}\right).\]
\end{lem}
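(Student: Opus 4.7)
The plan is a direct computation. The cases $n=2$ and $n=3$ are handled by expanding $Y_3$ explicitly: one has $Y_3=(1,2)X_3=(1,2)\bigl((1,3)+(2,3)\bigr)=(1,3,2)+(1,2,3)$, so that $\1+Y_3=\1+(1,2,3)+(1,3,2)=\mathcal{P}$; and for $n=2$ the sum defining $\mathcal{P}$ is empty, giving $\mathcal{P}=\1$.

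For $n>3$ the key observation is that the transpositions $(1,2)$ and $(n-1,n)$ have disjoint supports and therefore commute. Consequently
\[
t_{n-1}\cdot(1,2)=(1,2)(n-1,n)(1,2)=(n-1,n).
\]
Using Definition \ref{df: Jucys-Murphy} we have $Y_{n-1}+Y_n=(1,2)(X_{n-1}+X_n)$, so
\[
t_{n-1}(Y_{n-1}+Y_n)=(n-1,n)\bigl(X_{n-1}+X_n\bigr)=(n-1,n)\Bigl[\sum_{j=1}^{n-2}(j,n-1)+\sum_{j=1}^{n-1}(j,n)\Bigr].
\]

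Now I would expand term by term. Splitting off the $j=n-1$ summand from the second sum gives $(n-1,n)(n-1,n)=\1$, which accounts for the identity summand of $\mathcal{P}$. For $1\le j\le n-2$ a direct calculation of the product of transpositions yields
\[
(n-1,n)(j,n-1)=(j,n,n-1),\qquad (n-1,n)(j,n)=(j,n-1,n),
\]
and summing over $j\in\{1,\dots,n-2\}$ produces precisely the two families of $3$-cycles appearing in $\mathcal{P}$. Combining these contributions gives the desired identity.

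There is no real obstacle here; the only point requiring a little care is the convention for composition of permutations when verifying the two transposition identities $(n-1,n)(j,n-1)=(j,n,n-1)$ and $(n-1,n)(j,n)=(j,n-1,n)$, but this is routine once fixed.
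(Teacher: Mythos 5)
Your argument is correct and is essentially the paper's own proof run in the opposite direction: both rest on factoring each $3$-cycle as a product of two transpositions to recognize $X_{n-1}+X_n$, and on the fact that $(1,2)$ commutes with $(n-1,n)$ for $n>3$ so that $t_{n-1}(1,2)=(n-1,n)$. The small cases and the remark that the sum of the two products $(n-1,n)(j,n-1)+(n-1,n)(j,n)$ is convention-independent are also fine.
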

\begin{proof}
The cases of $n=2,3$ are just verification. We prove this lemma for $n>3$.
\begin{align*}
\mathcal{P}&=\1+\sum\limits_{i=1}^{n-2}\left((i,n-1,n)+(i,n,n-1)\right)\\
&=\1+\sum\limits_{i=1}^{n-2}\left((n,n-1)(n,i)+(n-1,n)(n-1,i)\right)\\
&=(n,n-1)\left((n,n-1)+\sum\limits_{i=1}^{n-2}(n,i)+\sum\limits_{i=1}^{n-2}(n-1,i)\right)\\
&=(n,n-1)\left(\sum\limits_{i=1}^{n-1}(n,i)+\sum\limits_{i=1}^{n-2}(n-1,i)\right)\\
&=(n,n-1)\left(X_n+X_{n-1}\right)\\
&=(1,2)(n,n-1)(1,2)\left(X_n+X_{n-1}\right)\\
&=t_{n-1}\left(Y_n+Y_{n-1}\right).
\end{align*}
\end{proof}
\begin{rem}
We note that $Y_iY_j=Y_jY_i$ for all $1\leq i,j\leq n$ and the common eigenvectors for $Y_i$s form a basis for the irreducible representations of $A_n$.
\end{rem}
A \emph{partition} $\lambda$ of a positive integer $n$ is a weakly decreasing finite sequence $(\lambda_1,\cdots,\lambda_r)$ of positive integers such that $\sum_{i=1}^{r}\lambda_i=n$. We write $\lambda\vdash n$ to mean $\lambda$ is a partition of $n$. A partition $\lambda$ can be pictorially visualised as a left-justified arrangement of $r$ rows of boxes with $\lambda_i$ boxes in the $i^{\text{th}}$ row. This pictorial arrangement of boxes is known as the \emph{Young diagram} of $\lambda$. For example there are five partitions of the positive integer $4$ viz. (4), (3,1), (2,2), (2,1,1) and (1,1,1,1). Young diagrams corresponding to the partitions of $4$ are the following:
\[\begin{array}{cclll}
\yng(4)&\hspace{0.5cm}\yng(3,1)& \hspace{0.5cm}\yng(2,2) & \hspace{0.5cm}\yng(2,1,1) & \hspace{0.75cm}\yng(1,1,1,1)\\
(4)\;&\;\quad(3,1)&\quad\;(2,2)&\quad(2,1,1)&\;(1,1,1,1)
\end{array}\]
A \emph{Young tableau} of shape $\lambda$ (or \emph{$\lambda$-tableau}) is obtained by filling the numbers $1,\dots,n$ in the boxes of the Young diagram of $\lambda$. A $\lambda$-tableau is \emph{standard} if the entries in its boxes increase from left to right along rows and from top to bottom along columns. The set of all standard tableaux of a given shape $\lambda$ is denoted by $\std$. For example, standard Young tableaux of shape $(3,1)$ are:
\[\begin{array}{ccc}
\young({{\substack{1}}}{{\substack{2}}}{{\substack{3}}},{{\substack{4}}}) &\quad\young({{\substack{1}}}{{\substack{2}}}{{\substack{4}}},{{\substack{3}}}) & \quad \young({{\substack{1}}}{{\substack{3}}}{{\substack{4}}},{{\substack{2}}})
\end{array}\]
The \emph{content} of a box in row $i$ and column $j$ of a diagram is the integer $j-i$. Given a tableau $T$, $c(T,i)$ denotes the content of the box labelled $i$ in $T$, $1\leq i\leq n$. Given a Young diagram $\lambda$, its \emph{conjugate} $\lambda^{\prime}$ is obtained by reflecting $\lambda$ with respect to the diagonal consisting of boxes with content $0$. A diagram $\lambda$ is \emph{self-conjugate} if $\lambda^{\prime}=\lambda$. An \emph{upper standard Young tableau} $T$ is a standard Young tableau $T$ such that $c(T,2)=1$. The collection of all upper standard tableaux of a given shape $\lambda$ is denoted by $\Ustd$.

\begin{lem}
 For $n>1$, the cardinality of $\cup_{\lambda\vdash n}\Ustd$ is half the cardinality of $\cup_{\lambda\vdash n}\std$. Moreover, for self-conjugate $\lambda\vdash n$ we have,  $|\Ustd|=\frac{1}{2}|\std|.$
\end{lem}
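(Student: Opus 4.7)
The plan is to take transposition (conjugation) of Young tableaux as the bijection that proves both halves of the lemma. The starting observation is that for any $n\geq 2$ and any $\lambda\vdash n$, the entry $1$ of a standard tableau $T$ is forced into the $(1,1)$ cell (content $0$), so the entry $2$ must occupy one of the two cells adjacent to it: either $(1,2)$ (content $+1$) or $(2,1)$ (content $-1$). Writing $\Lstd$ for the set of standard $\lambda$-tableaux with $c(T,2)=-1$, this yields a disjoint decomposition $\std=\Ustd\sqcup\Lstd$, where one of the two pieces is empty precisely when $\lambda$ consists of a single row or a single column.

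Next, I would introduce the transposition map $T\mapsto T^{t}$ obtained by reflecting $T$ across its main diagonal. The usual rows/columns check shows that $T^{t}$ is a standard tableau of the conjugate shape $\lambda^{\prime}$, and since the cells $(1,2)$ and $(2,1)$ are swapped by the reflection, this restricts to a bijection from $\Ustd$ onto the set of standard $\lambda^{\prime}$-tableaux with entry $2$ at content $-1$. Summing over $\lambda\vdash n$, transposition therefore pairs $\bigcup_{\lambda\vdash n}\Ustd$ with $\bigcup_{\lambda\vdash n}\Lstd$, and because these two sets together partition $\bigcup_{\lambda\vdash n}\std$, each has exactly half the cardinality. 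This proves the first statement.

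For the second statement, restrict to self-conjugate $\lambda=\lambda^{\prime}$. Then transposition becomes an involution of $\std$ itself that swaps $\Ustd$ with $\Lstd$, so $|\Ustd|=|\Lstd|$, and combined with the decomposition $\std=\Ustd\sqcup\Lstd$ this gives $|\Ustd|=\tfrac{1}{2}|\std|$. The only subtlety is to note that self-conjugacy with $n\geq 2$ forces $\lambda_1=\lambda^{\prime}_1\geq 2$, so both cells $(1,2)$ and $(2,1)$ actually belong to $\lambda$ and neither piece of the decomposition is empty; hence no degenerate case arises. I do not expect any real obstacle here — the entire argument is elementary combinatorics once one recognises that transposition is the natural involution to use.
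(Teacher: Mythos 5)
Your proposal is correct and follows essentially the same route as the paper: decompose $\std$ into $\Ustd$ and the set $\Lstd$ of tableaux with $c(T,2)=-1$, and use transposition as the bijection between the unions (resp.\ between the two pieces when $\lambda$ is self-conjugate). The only difference is that you explicitly handle the degenerate single-row/single-column cases, which the paper leaves implicit.
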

\begin{proof}
Let us consider the set $\Lstd:=\{T\in\std\mid c(T,2)=-1\}$. Then by sending each element of $\cup_{\lambda\vdash n}\Ustd$ to its transpose (i.e. reflecting it with respect to the diagonal containing boxes with content $0$), we have a one to one correspondence between $\cup_{\lambda\vdash n}\Ustd$ and $\cup_{\lambda\vdash n}\Lstd$. Also, it is obvious that $\left(\cup_{\lambda\vdash n}\Ustd\right)\cap\left(\cup_{\lambda\vdash n}\Lstd\right)=\phi$ and $\left(\cup_{\lambda\vdash n}\Ustd\right)\cup\left(\cup_{\lambda\vdash n}\Lstd\right)=\;\cup_{\lambda\vdash n}\std$. Hence, $|\cup_{\lambda\vdash n}\Ustd|=\frac{1}{2}|\cup_{\lambda\vdash n}\std|$. 

Also, for self-conjugate $\lambda\vdash n,\;\Ustd\cup\Lstd=\std$ and the same map as above gives a bijection from $\Lstd$ to $\Ustd$). Therefore,  $|\Ustd|=\frac{1}{2}|\std|$.
\end{proof}
We now describe all the irreducible representations of $A_n$ (for more details, see \cite{Ruff}). Corresponding to each non-self-conjugate partition $\lambda$ of $n$, there is an irreducible representation $D_{\lambda}$ of $A_n$. Given any non-self-conjugate partition $\lambda$ of $n$, the irreducible representations $D_{\lambda}$ and $D_{\lambda^{\prime}}$ of $A_n$ are isomorphic. For each self-conjugate partition $\lambda$ of $n$, there are two non-isomorphic irreducible representations $D^+_{\lambda}$ and $D^-_{\lambda}$ of $A_n$. All the irreducible representations of $A_n$ are given by $D_{\lambda}$, $\lambda\vdash n$ non-self-conjugate and $D^{\pm}_{\lambda}$, $\lambda\vdash n$ self-conjugate. The basis of $D_{\lambda}$ is identified by the elements of $\Ustd\cup\Ustdc$ for non-self-conjugate $\lambda\vdash n$ and that of $D^{\pm}_{\lambda}$ are identified by the elements of $\Ustd$ for self-conjugate $\lambda\vdash n$. We denote the dimension of $D_{\lambda}$ by $d_{\lambda}$ (when $\lambda\vdash n$ is non-self-conjugate) and dimensions of $D^{\pm}_{\lambda}$ by $d^{\pm}_{\lambda}$ (for self-conjugate $\lambda\vdash n$) respectively. Therefore, we have the following:
\begin{align*}
 d_{\lambda}= & |\Ustd|+|\Ustdc|=|\std|\;\text{ for non-self-conjugate }\lambda\vdash n
 \text{ and }\\
& d_{\lambda}^+ =d_{\lambda}^-=|\Ustd|=\frac{1}{2}|\std|\; \text{ for self-conjugate }\lambda\vdash n.
\end{align*}
Let $\Par$ be the set of all partitions of $n$. Let us consider two subsets $\Cpar$ and $\Ncpar$ defined as follows:
\begin{align*}
\Cpar&=\{\lambda\in\Par\mid\lambda=\lambda^{\prime}\}\;\text{ and }\\
\Ncpar&=\bigg\{\lambda\in\Par\bigg|\lambda\neq\lambda^{\prime},
\begin{minipage}{2.5in}
$\lambda$ contains more boxes of positive content than that of $\lambda^{\prime}$
\end{minipage}\bigg\}.
\end{align*}
In the regular representation of a finite group $G$, each irreducible representation of $G$ occurs with multiplicity equal to its dimension \cite[section 2.4]{Serre}. Therefore, from the above discussion, we have the following:
\begin{equation}\label{eq:A_n module decomposition}
\mathbb{C}[A_n]\cong\left(\underset{\lambda\in\Ncpar}{\oplus}d_{\lambda}D_{\lambda}\right)\oplus\left(\underset{\lambda\in\Cpar}{\oplus}d^+_{\lambda}D^+_{\lambda}\right)\oplus\left(\underset{\lambda\in\Cpar}{\oplus}d^-_{\lambda}D^-_{\lambda}\right).
\end{equation}

Now we discuss the actions of the generators $t_i,\;3\leq i\leq n-1$ and the Jucys-Murphy elements on the irreducible representations of $A_n$ (we don't need the action of $t_2$ on irreducible representations of $A_n$ in this work). Given any partition $\lambda$ of $n$, let us define $\alpha=(a_1,\dots,a_n):=(c(T_{\alpha},1),\dots,c(T_{\alpha},n))$, where  $T_{\alpha}\in \Ustd\cup\Ustdc$ ($=\Ustd$, if $\lambda$ is self-conjugate). Ruff in \cite{Ruff} showed that for $\lambda\in\Ncpar$, if $v_{\alpha}$ is the basis element of $D_{\lambda}$ corresponding to $T_{\alpha}\in \Ustd\cup\Ustdc$, then $Y_iv_{\alpha}=a_iv_{\alpha}$ for all $1\leq i\leq n$. Moreover for $3\leq i<n$, the action of $t_i$ on $v_{\alpha}$ is given as follows:
\begin{equation}\label{t_i action on basis}
t_iv_{\alpha}=\frac{1}{a_{i+1}-a_i}\;v_{\alpha}+\sqrt{-1}\;(-1)^{\alpha,i}\sqrt{1-\frac{1}{(a_{i+1}-a_i)^2}}\;\;v_{t_i\alpha},
\end{equation}
where 
\begin{equation*}
(-1)^{\alpha,i}=
\begin{cases}
1\quad\text{if }a_i<a_{i+1},\\
-1\quad\text{if }a_i>a_{i+1},
\end{cases}\quad\text{ and }
\end{equation*}
$t_i\alpha:=(a_1,\dots,a_{i-1},a_{i+1},a_{i},a_{i+2},\dots a_n)$ if $a_{i+1}\neq a_i\pm 1$. We don't need $t_i\alpha$ when $a_{i+1}= a_i\pm 1$, because the coefficient of $v_{t_i\alpha}$ in the expression \eqref{t_i action on basis} is zero in that case.

Also for $\lambda\in\Cpar$, if $v^+_{\alpha},\;v^-_{\alpha}$ are the basis elements of $D^+_{\lambda},\;D^-_{\lambda}$ respectively, corresponding to $T_{\alpha}\in \Ustd$, then the actions of $Y_i,\;1\leq i\leq n$ and $t_i,\; 3\leq i<n$ on $v^{\pm}_{\alpha}$ are same as their respective actions on $v_{\alpha}$ in case of $\lambda\in\Ncpar$. Now we are in a position to find the eigenvalues of $\mathcal{P}$. 
\begin{thm}\label{thm:eigenvalues non-conj case}
Let $\lambda\vdash n$ be a non-self-conjugate partition of $n$. For each $T_{\alpha}\in \Ustd\cup\Ustdc$, we have an eigenvalue of $\mathcal{P}$, given as follows:
\begin{enumerate}
\item $2a_n-1$ is an eigenvalue of $\mathcal{P}$, if $a_n=a_{n-1}+1$.
\item $-(2a_n+1)$ is an eigenvalue of $\mathcal{P}$, if $a_n=a_{n-1}-1$.
\item $\pm(a_n+a_{n-1})$ are eigenvalues of $\mathcal{P}$, if $a_n\neq a_{n-1}\pm1\text{ and } a_{n-1}<a_n$.
\end{enumerate}
Here, we have used $\alpha=(a_1,\dots,a_n):=(c(T_{\alpha},1),\dots,c(T_{\alpha},n))$. Moreover, each eigenvalue has multiplicity $d_{\lambda}$. Also, note that the sets of eigenvalues obtained by considering the partitions $\lambda$ and $\lambda^{\prime}$ are same.
\end{thm}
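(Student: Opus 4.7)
The plan is to exploit the factorization $\mathcal{P} = t_{n-1}(Y_n + Y_{n-1})$ established in Lemma \ref{lem: relation of P and JM-elements}, together with the explicit Young--Jucys--Murphy action of $Y_i$ and $t_{n-1}$ on the Gelfand--Tsetlin basis of $D_\lambda$. Since $(Y_n + Y_{n-1})$ acts as the scalar $a_n + a_{n-1}$ on $v_\alpha$, I obtain
$$\mathcal{P} v_\alpha = (a_n + a_{n-1})\, t_{n-1} v_\alpha,$$
so the whole problem reduces to analysing the action of $t_{n-1}$ on the pair $\{v_\alpha, v_{t_{n-1}\alpha}\}$ governed by \eqref{t_i action on basis}.

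Cases (1) and (2) would then be one-line specialisations. When $a_n = a_{n-1} + 1$, the coefficient $\tfrac{1}{a_n - a_{n-1}}$ equals $1$ and the square-root coefficient vanishes, so $t_{n-1} v_\alpha = v_\alpha$ and the eigenvalue is $a_n + a_{n-1} = 2a_n - 1$. When $a_n = a_{n-1} - 1$, the same formula gives $t_{n-1} v_\alpha = -v_\alpha$, producing eigenvalue $-(a_n + a_{n-1}) = -(2a_n + 1)$.

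For case (3), with $a_n \neq a_{n-1} \pm 1$ and $a_{n-1} < a_n$, swapping the labels $n-1$ and $n$ in $T_\alpha$ still produces a standard tableau (the two boxes lie in neither the same row nor the same column), and it remains upper standard because $n-1 > 2$. Hence $v_{t_{n-1}\alpha}$ is a legitimate GZ basis vector and $\mathrm{span}\{v_\alpha, v_{t_{n-1}\alpha}\}$ is $t_{n-1}$-stable. Writing out the $2 \times 2$ matrix of $t_{n-1}$ on this basis from \eqref{t_i action on basis}, with $(-1)^{\alpha, n-1} = +1$ and $(-1)^{t_{n-1}\alpha, n-1} = -1$, I would find a matrix of trace $0$ and determinant $-1$, so its eigenvalues are $\pm 1$ (which is also forced by $t_{n-1}^2 = \1$). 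Therefore $\mathcal{P}$ acts on this subspace with eigenvalues $\pm(a_n + a_{n-1})$, and restricting to $a_{n-1} < a_n$ ensures each two-dimensional block is counted exactly once while both signs are recorded.

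The multiplicity statement follows from the regular-representation decomposition \eqref{eq:A_n module decomposition}: a single eigenvalue arising from $D_\lambda$ contributes multiplicity $d_\lambda$ to the spectrum of $\mathcal{P}$ on $\mathbb{C}[A_n]$. The final remark, that $\lambda$ and $\lambda'$ yield identical eigenvalue sets, is automatic from $D_\lambda \cong D_{\lambda'}$ for non-self-conjugate $\lambda$. The main obstacle I anticipate lies in case (3): correctly tracking the two signs $(-1)^{\cdot, n-1}$ when passing between $\alpha$ and $t_{n-1}\alpha$, and verifying that the resulting $2 \times 2$ matrix indeed has spectrum $\{\pm 1\}$; once this is pinned down, the rest of the argument is straightforward bookkeeping.
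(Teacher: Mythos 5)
Your proposal is correct and follows essentially the same route as the paper: factor $\mathcal{P}=t_{n-1}(Y_n+Y_{n-1})$ via Lemma \ref{lem: relation of P and JM-elements}, read off cases (1)--(2) from the vanishing of the off-diagonal coefficient in \eqref{t_i action on basis}, and diagonalize a traceless involution-type $2\times 2$ block in case (3), with multiplicities from \eqref{eq:A_n module decomposition}. The only cosmetic difference is that the paper works in the ordered basis $\{v_{\alpha},\,t_{n-1}v_{\alpha}\}$ and uses $t_{n-1}Y_{n-1}=Y_nt_{n-1}-\1$ together with $t_{n-1}^2=\1$ to obtain the off-diagonal matrix \eqref{eq: 2X2 blocks}, whereas you work in $\{v_{\alpha},\,v_{t_{n-1}\alpha}\}$ and compute the trace and determinant of $t_{n-1}$ directly; both yield the eigenvalues $\pm(a_n+a_{n-1})$.
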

\begin{proof}
The theorem is trivially true for $n=2,3$. Now we prove the theorem for $n>3$. For any non-self-conjugate $\lambda\vdash n$, we can choose basis element $v_{\alpha}$ of $D_{\lambda}$ corresponding to $T_{\alpha}\in\Ustd\cup\Ustdc$ such that $Y_iv_{\alpha}=a_iv_{\alpha}\text{ for all }i=1,\dots,n$. Now a basis $\mathcal{B}$ of $D_{\lambda}$ is the union of the following three sets:
\[\mathcal{B}_1:=\{v_{\alpha}\mid a_n=a_{n-1}+1\},\;\mathcal{B}_2:=\{v_{\alpha}\mid a_n=a_{n-1}-1\},\;\mathcal{B}_3:=\{v_{\alpha}\mid a_n\neq a_{n-1}\pm1\}.\]
For any upper standard Young tableau $T_{\alpha}\in\mathcal{B}_3$, we have another upper standard Young tableau $T_{t_{n-1}\alpha}\in\mathcal{B}_3$. Therefore, the cardinality of $\mathcal{B}_3$ is even and  $\mathcal{B}_3=\{v_{\alpha},\;v_{t_{n-1}\alpha}\mid a_n\neq a_{n-1}\pm1,\;a_{n-1}<a_n\}.$ Again for $a_n\neq a_{n-1}\pm1$, from \eqref{t_i action on basis}, we have 
\[\mathbb{C}\text{-Span }\{v_{\alpha},\;v_{t_{n-1}\alpha}\}=\mathbb{C}\text{-Span }\{v_{\alpha},\;t_{n-1}v_{\alpha}\}.\]
Therefore $\mathcal{B}_1\cup\mathcal{B}_2\cup\{v_{\alpha},\;t_{n-1}v_{\alpha}\mid a_n\neq a_{n-1}\pm1,\;a_{n-1}<a_n\}$ is a basis for $D_{\lambda}$. Let us consider the ordered basis $\mathcal{B}^{'}$ of $D_{\lambda}$ in which we first collect all the vectors from $\mathcal{B}_1$. Then all the vectors from $\mathcal{B}_2$ and finally the pair of vectors $(v_{\alpha},\;t_{n-1}v_{\alpha})$ from $\{v_{\alpha},\;t_{n-1}v_{\alpha}\mid a_n\neq a_{n-1}\pm1,\;a_{n-1}<a_n\}$. For $v_{\alpha}\in\mathcal{B}_1$,
\begin{align*}
\mathcal{P}v_{\alpha}&=t_{n-1}\left(Y_{n-1}+Y_n\right)v_{\alpha},\quad\text{ by Lemma \ref{lem: relation of P and JM-elements}}\\
&=(a_{n-1}+a_n)t_{n-1}v_{\alpha}\\
&=(a_{n-1}+a_n)v_{\alpha},\quad\text{ by \eqref{t_i action on basis}}\\
&=(2a_n-1)v_{\alpha}.
\end{align*}
Again for $v_{\alpha}\in\mathcal{B}_2$,
\begin{align*}
\mathcal{P}v_{\alpha}&=t_{n-1}\left(Y_{n-1}+Y_n\right)v_{\alpha},\quad\text{ by Lemma \ref{lem: relation of P and JM-elements}}\\
&=(a_{n-1}+a_n)t_{n-1}v_{\alpha}\\
&=-(a_{n-1}+a_n)v_{\alpha},\quad\text{ by \eqref{t_i action on basis}}\\
&=-(2a_n+1)v_{\alpha}.
\end{align*}
Therefore $\mathcal{P}$ acts on $\mathcal{B}_1$ and $\mathcal{B}_2$ as scalars. Now for $v_{\alpha}\in\mathcal{B}_3$, using $t_{n-1}Y_{n-1}=Y_nt_{n-1}-\1$ and $t_{n-1}^2=\1$, the matrix for the action of $\mathcal{P}$ on $\{v_{\alpha},\;t_{n-1}v_{\alpha}\}$ is given below,
\begin{equation}\label{eq: 2X2 blocks}
\left(
{\begin{array}{cc}
0 & a_{n-1}+a_n\\
a_{n-1}+a_n & 0 \end{array}}
\right).
\end{equation}
The eigenvalues of the above $2\times 2$ matrix are $\pm(a_{n-1}+a_n)$. Therefore, the matrix of $\mathcal{P}$ with respect to the ordered basis $\mathcal{B}^{'}$ is a block diagonal matrix, where first $|\mathcal{B}_1|$ blocks are the $1\times 1$ matrix $(2a_n-1)$, next $|\mathcal{B}_2|$ blocks are the $1\times 1$ matrix $-(2a_n+1)$ and last $|\{\alpha: a_n\neq a_{n-1}\pm1,\;a_{n-1}<a_n\}|$ blocks are the $2\times 2$ matrix \eqref{eq: 2X2 blocks}. The argument for the multiplicity of the eigenvalues follows from \eqref{eq:A_n module decomposition} .Thus the theorem follows.
\end{proof}
\begin{thm}\label{thm:eigenvalues conj case}
For a self-conjugate partition $\lambda\vdash n$, each $T_{\alpha}\in \Ustd$ provides an eigenvalue of $\mathcal{P}$, given as follows:
\begin{enumerate}
\item $2a_n-1$ is an eigenvalue of $\mathcal{P}$, if $a_n=a_{n-1}+1$.
\item $-(2a_n+1)$ is an eigenvalue of $\mathcal{P}$, if $a_n=a_{n-1}-1$.
\item $\pm(a_n+a_{n-1})$ are eigenvalues of $\mathcal{P}$, if $a_n\neq a_{n-1}\pm1\text{ and } a_{n-1}<a_n$.
\end{enumerate}
Here, we have used $\alpha=(a_1,\dots,a_n):=(c(T_{\alpha},1),\dots,c(T_{\alpha},n))$. Moreover, each eigenvalue has multiplicity $|\std|=d^+_{\lambda}+d^-_{\lambda}$.
\end{thm}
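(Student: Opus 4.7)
The plan is to imitate the proof of Theorem \ref{thm:eigenvalues non-conj case} essentially verbatim. The crucial input, already recorded above, is that for any self-conjugate $\lambda\vdash n$ and for either sign $\epsilon\in\{+,-\}$, the operators $Y_i$ ($1\leq i\leq n$) and $t_i$ ($3\leq i<n$) act on the basis vector $v^{\epsilon}_{\alpha}$ of $D^{\epsilon}_{\lambda}$ by exactly the same formulas as on $v_{\alpha}$ in the non-self-conjugate case. Since Lemma \ref{lem: relation of P and JM-elements} expresses $\mathcal{P}=t_{n-1}(Y_{n-1}+Y_n)$ as a polynomial in precisely these operators (and only $t_{n-1}$ appears, so the omission of the $t_2$-action on $D^{\pm}_{\lambda}$ is harmless), the eigenanalysis of $\mathcal{P}$ on $D^{\pm}_{\lambda}$ is formally identical to the eigenanalysis on $D_{\mu}$ carried out in Theorem \ref{thm:eigenvalues non-conj case}.

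Concretely, for each $\epsilon\in\{+,-\}$ I would fix the $\Ustd$-indexed basis of $D^{\epsilon}_{\lambda}$ and partition it into three pieces $\mathcal{B}_1^{\epsilon}\cup\mathcal{B}_2^{\epsilon}\cup\mathcal{B}_3^{\epsilon}$ according as $a_n=a_{n-1}+1$, $a_n=a_{n-1}-1$, or $a_n\neq a_{n-1}\pm 1$. On $\mathcal{B}_1^{\epsilon}$ and $\mathcal{B}_2^{\epsilon}$, the same scalar computations as before show that $\mathcal{P}$ acts by $2a_n-1$ and $-(2a_n+1)$ respectively. On $\mathcal{B}_3^{\epsilon}$, the involution $\alpha\leftrightarrow t_{n-1}\alpha$ pairs basis vectors, and on each two-dimensional invariant subspace spanned by $\{v^{\epsilon}_{\alpha},\,t_{n-1}v^{\epsilon}_{\alpha}\}$ with $a_{n-1}<a_n$, the matrix of $\mathcal{P}$ is the same antidiagonal block as in equation \eqref{eq: 2X2 blocks}, with eigenvalues $\pm(a_{n-1}+a_n)$. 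Reassembling gives, on each of $D^{+}_{\lambda}$ and $D^{-}_{\lambda}$, exactly the list of eigenvalues claimed, with each $T_{\alpha}\in\Ustd$ producing one eigenvalue in cases (1)-(2) and two eigenvalues in case (3) (these two corresponding to the two orderings within a pair).

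For the multiplicity, I appeal to the decomposition \eqref{eq:A_n module decomposition}: $D^{+}_{\lambda}$ occurs in $\mathbb{C}[A_n]$ with multiplicity $d^{+}_{\lambda}$ and $D^{-}_{\lambda}$ with multiplicity $d^{-}_{\lambda}$. Each of the eigenvalues listed appears with multiplicity $d^{+}_{\lambda}$ inside the $D^{+}_{\lambda}$-isotypic component and with multiplicity $d^{-}_{\lambda}$ inside the $D^{-}_{\lambda}$-isotypic component, so its total multiplicity on $\mathbb{C}[A_n]$ equals $d^{+}_{\lambda}+d^{-}_{\lambda}=|\std|$, as asserted. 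I do not expect any genuine obstacle: the only bookkeeping point is the $\mathcal{B}_3^{\epsilon}$ pairing, which is handled identically to the non-self-conjugate case; everything else is copied from the proof of Theorem \ref{thm:eigenvalues non-conj case} with the substitution $v_{\alpha}\mapsto v^{\epsilon}_{\alpha}$.
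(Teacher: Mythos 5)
Your proposal is correct and follows exactly the paper's own route: the paper's proof of this theorem is literally the one-line instruction to repeat the proof of Theorem \ref{thm:eigenvalues non-conj case} with $D_{\lambda}$, $v_{\alpha}$, $d_{\lambda}$ replaced by $D^{\pm}_{\lambda}$, $v^{\pm}_{\alpha}$, $d^{\pm}_{\lambda}$, which is what you carry out (with the multiplicity count $d^{+}_{\lambda}+d^{-}_{\lambda}=|\std|$ justified via \eqref{eq:A_n module decomposition} just as in the paper).
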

\begin{proof}
The proof is similar to the proof of Theorem \ref{thm:eigenvalues non-conj case}. Proof of this theorem follows by replacing $D_{\lambda}$, $v_{\alpha}$ and $d_{\lambda}$ by $D^{\pm}_{\lambda}$, $v^{\pm}_{\alpha}$ and $d^{\pm}_{\lambda}$ respectively, in the proof of Theorem \ref{thm:eigenvalues non-conj case}.
\end{proof}
\begin{cor}\label{thm:eigenvalues combined case}
Let $\lambda$ be a partition of $n$. Then each $T_{\alpha}\in \Ustd\cup\Ustdc$ provides an eigenvalue of $\mathcal{P}$, given as follows:
\begin{enumerate}
	\item $2a_n-1$ is an eigenvalue of $\mathcal{P}$, if $a_n=a_{n-1}+1$.
	\item $-(2a_n+1)$ is an eigenvalue of $\mathcal{P}$, if $a_n=a_{n-1}-1$.
	\item $\pm(a_n+a_{n-1})$ are eigenvalues of $\mathcal{P}$, if $a_n\neq a_{n-1}\pm1\text{ and } a_{n-1}<a_n$.
\end{enumerate}
Here, we have used $\alpha=(a_1,\dots,a_n):=(c(T_{\alpha},1),\dots,c(T_{\alpha},n))$. Moreover, each eigenvalue has multiplicity $d_{\lambda}:=|\std|$.
\end{cor}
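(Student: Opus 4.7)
The plan is to observe that Corollary \ref{thm:eigenvalues combined case} is essentially just a unified packaging of Theorems \ref{thm:eigenvalues non-conj case} and \ref{thm:eigenvalues conj case}: the three rules giving the eigenvalues are stated identically in both theorems, so the only work is to reconcile the notation $\Ustd\cup\Ustdc$ and the multiplicity $d_\lambda:=|\std|$ across the two cases. I would therefore proceed by a simple case split on whether $\lambda$ is self-conjugate.

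First, suppose $\lambda\in\Ncpar$ or (equivalently for our purposes) $\lambda\vdash n$ is non-self-conjugate. Then by Theorem \ref{thm:eigenvalues non-conj case}, each $T_\alpha\in\Ustd\cup\Ustdc$ contributes an eigenvalue of $\mathcal{P}$ by exactly the three rules listed in the corollary, and each eigenvalue has multiplicity $d_\lambda$ in the sense of that theorem. Since for non-self-conjugate $\lambda$ we have established $d_\lambda=|\Ustd|+|\Ustdc|=|\std|$, this agrees with the definition $d_\lambda:=|\std|$ used in the corollary statement, so nothing more is needed in this case. (Note also that Theorem \ref{thm:eigenvalues non-conj case} includes the observation that the sets of eigenvalues coming from $\lambda$ and from $\lambda'$ coincide, so the corollary is unambiguous regardless of which representative of the pair $\{\lambda,\lambda'\}$ one picks.)

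Next, suppose $\lambda\in\Cpar$, that is, $\lambda$ is self-conjugate. Then $\lambda'=\lambda$, and hence as sets of tableaux $\Ustdc=\Ustd$, so $\Ustd\cup\Ustdc=\Ustd$. Theorem \ref{thm:eigenvalues conj case} now gives, for each $T_\alpha\in\Ustd$, an eigenvalue of $\mathcal{P}$ according to the same three rules, with multiplicity $|\std|=d^+_\lambda+d^-_\lambda$. This again matches the corollary's prescription $d_\lambda:=|\std|$, since the two isomorphism classes $D^+_\lambda$ and $D^-_\lambda$ contribute $d^+_\lambda$ and $d^-_\lambda$ copies respectively to the decomposition \eqref{eq:A_n module decomposition}, and each $T_\alpha$ indexes a basis vector in both.

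In short, the only real content of the corollary is the notational observation that $d_\lambda:=|\std|$ uniformly captures the multiplicity in both regimes, and that $\Ustd\cup\Ustdc$ collapses to $\Ustd$ precisely when $\lambda$ is self-conjugate. Consequently there is no substantive obstacle; the main (and only) bookkeeping step is to verify these identifications of cardinalities in the two cases, which is immediate from the dimension counts tabulated just after \eqref{eq:A_n module decomposition}.
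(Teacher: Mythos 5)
Your proof is correct and follows essentially the same route as the paper: a case split on whether $\lambda$ is self-conjugate, reducing to Theorems \ref{thm:eigenvalues non-conj case} and \ref{thm:eigenvalues conj case} and checking that $\Ustd\cup\Ustdc$ and the multiplicity $d_\lambda:=|\std|$ reconcile correctly in each case. The paper's own proof is just a terser version of the same bookkeeping.
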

\begin{proof}
If $\lambda\vdash n$ is non-self-conjugate, then the result follows directly from Theorem \ref{thm:eigenvalues non-conj case}. Now if $\lambda\vdash n$ is self-conjugate, then we have $\Ustd=\Ustd\cup\Ustdc$. Therefore, in the case of self-conjugate $\lambda$, this result follows from Theorem \ref{thm:eigenvalues conj case}.
\end{proof}
\begin{rem}
Corollary \ref{thm:eigenvalues combined case}, together with \eqref{eq:A_n module decomposition}, determines the spectrum of $\mathcal{P}$ and hence that of $\widehat{P}(R)$.
\end{rem}
\begin{exmp}
If $n=4$, then eigenvalues of $\widehat{P}(R)$  are the following: 
\[\left(1,\;\frac{3}{5},\;\frac{3}{5},\;\frac{3}{5},\;\frac{1}{5},\;\frac{1}{5},\;\frac{1}{5},\;-\frac{1}{5},\;-\frac{1}{5},\;-\frac{1}{5},\;-\frac{1}{5},\;-\frac{1}{5}\right).\]
\end{exmp}
For the only element $T_{(0,1,2,3)}=\begin{array}{c}\young({{\substack{1}}}{{\substack{2}}}{{\substack{3}}}{{\substack{4}}})\end{array}$ of   $\Ustdexample((4))\cup\Ustdexample((1,1,1,1))$ we have, $a_3=2,\;a_4=3$. Hence $a_4=a_3+1$ and the eigenvalue of $\widehat{P}(R)$ corresponding to $T_{(0,1,2,3)}$ is $1$ with multiplicity $1$. The elements of $\Ustdexample((3,1))\cup\Ustdexample((2,1,1))$ are listed below:
\[T_{(0,1,2,-1)}=\begin{array}{c}\young({{\substack{1}}}{{\substack{2}}}{{\substack{3}}},{{\substack{4}}})\end{array},\;T_{(0,1,-1,2)}=\begin{array}{c}\young({{\substack{1}}}{{\substack{2}}}{{\substack{4}}},{{\substack{3}}})\end{array},\;T_{(0,1,-1,-2)}=\begin{array}{c}\young({{\substack{1}}}{{\substack{2}}},{{\substack{3}}},{{\substack{4}}})\end{array}.\]
Now $a_3=2,\;a_4=-1$ for $T_{(0,1,2,-1)}$ and $a_3=-1,\;a_4=2$ for $T_{(0,1,-1,2)}$. Thus for both $T_{(0,1,2,-1)}$ and $T_{(0,1,-1,2)}$ we have $a_4\neq a_3\pm 1$. In order to satisfying $a_3<a_4$ we choose $T_{(0,1,-1,2)}$ and the eigenvalues of $\widehat{P}(R)$ in this case are $\pm\frac{1}{5}$ with multiplicity $3$ each. Again for $T_{(0,1,-1,-2)}$ we have $a_3=-1,\;a_4=-2$ which satisfies $a_4=a_3-1$. Thus the eigenvalue of $\widehat{P}(R)$ corresponding to $T_{(0,1,-1,-2)}$ is $\frac{3}{5}$ with multiplicity $3$. Finally considering the only element $T_{(0,1,-1,0)}=\begin{array}{c}\young({{\substack{1}}}{{\substack{2}}},{{\substack{3}}}{{\substack{4}}})\end{array}$ of $\Ustdexample((2,2))$ we have $a_3=-1,\;a_4=0$ and thus $a_4=a_3+1$. Therefore the eigenvalue of $\widehat{P}(R)$ corresponding to $T_{(0,1,-1,0)}$ is $-\frac{1}{5}$ with multiplicity $2$.
\begin{exmp}
Given $n\geq 4$, the eigenvalues of $\widehat{P}(R)$ for the $(n-1)$-dimensional irreducible representation $D_{(n-1,1)}\;(\text{or }D_{(2,1^{n-2})})$ are given as follows:
\begin{center}
		\begin{tabular}{cccc}
		\emph{Eigenvalues:} & $\frac{n-3}{2n-3}$ &\quad $-\frac{n-3}{2n-3}$ &\quad $\frac{2n-5}{2n-3}$\\
    	\emph{Multiplicities:}& $n-1$ &\quad $n-1$ &\quad $(n-3)(n-1)$
		\end{tabular}
\end{center}
\end{exmp}
First consider the elements 
\[\young({{\substack{1}}}{{\substack{2}}}{{\substack{\cdots}}}{{\substack{n\text{-}1}}}{{\substack{n}}},{{\substack{i}}}),\quad3\leq i \leq n-2\]
of $\Ustdexample((n-1,1))$. Each of these elements satisfies $a_n=a_{n-1}+1$ as $a_{n-1}=n-3,\;a_n=n-2$.
Therefore the eigenvalues of $\widehat{P}(R)$ corresponding to each of these elements are $\frac{2n-5}{2n-3}$ with multiplicity $n-1$ each. There are $n-4$ such upper standard tableaux, thus multiplicity of the eigenvalue $\frac{2n-5}{2n-3}$ is $(n-4)(n-1)$. Now considering the element 
$\begin{array}{c}\young({{\substack{1}}}{{\substack{2}}}{{\substack{\cdots}}}{{\substack{n\text{-}1}}},{{\substack{n}}})\end{array}$ of $\Ustdexample((n-1,1))$, we have $a_{n-1}=n-2,\;a_n=-1$ and thus $a_n\neq a_{n-1}\pm 1$ but $a_{n-1}\nless a_n$. Therefore we will not select this upper standard tableaux. For the element
$\begin{array}{c}
\young({{\substack{1}}}{{\substack{2}}}{{\substack{\cdots}}}{{\substack{n}}},{{\substack{n\text{-}1}}})
\end{array}$ of $\Ustdexample((n-1,1))$, we have $a_{n-1}=-1,\;a_n=n-2$. Hence this upper standard tableaux satisfies $a_n\neq a_{n-1}\pm 1$ and $a_{n-1}< a_n$. Therefore the eigenvalues of $\widehat{P}(R)$ corresponding to this upper standard tableaux are $\pm\frac{n-3}{2n-3}$ with multiplicity $n-1$ each. Finally we consider the only element
\[\young({{\substack{1}}}{{\substack{2}}},{{\substack{3}}},{{\substack{4}}},{{\smash{\vdots}}},{{\substack{n\text{-}1}}},{{\substack{n}}})\] of $\Ustdexample(2,1^{n-2})$. This upper standard tableaux satisfies $a_n=a_{n-1}-1$ as $a_{n-1}=-(n-3),\;a_n=-(n-2)$. Thus the eigenvalue of $\widehat{P}(R)$ corresponding to this upper standard tableaux is $\frac{2n-5}{2n-3}$ with multiplicity $n-1$. 

\section{Upper Bound for total variation distance}\label{sec:upper bound}
In this section, we prove Theorem \ref{thm:UB for Alt}. We first state the \emph{Upper bound lemma}.
\begin{lem}[Upper bound lemma]\cite[p. 16, lemma 4.2]{Diaconis Note1}\label{Upper Bound Lemma}
Let $p$ be a probability measure on a finite group  $G$ such that $p(x)=p(x^{-1})$ for all $x\in G$. 
Suppose the random walk on $G$ driven by $p$ is irreducible. Then we have the following
\[||p^{*k}-U_{G}||^2_{\text{TV}}\leq\frac{1}{4}\sum\limits_{\rho}^{*}d_{\rho}\left(\emph{Trace}\left(\left(\widehat{p}(\rho)\right)^{2k}\right)\right),\]
where the sum is over all non-trivial irreducible representations  $D_{\rho}$ of $G$ and $d_{\rho}$ is the dimension of $D_{\rho}$. 
\end{lem}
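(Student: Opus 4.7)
The plan is to prove the Upper Bound Lemma by going through the chain $\text{TV} \leadsto L^1 \leadsto L^2 \leadsto \text{Fourier side}$, which is the standard Diaconis--Shahshahani route. First I would convert the TV-distance into an $L^1$-norm via the identity $\|p^{*k}-U_G\|_{\text{TV}} = \tfrac12 \sum_{x \in G}|p^{*k}(x) - 1/|G||$ already recalled in the introduction. Then I would apply the Cauchy--Schwarz inequality to compare $L^1$ to $L^2$:
\[
\sum_{x \in G}\bigl|p^{*k}(x) - 1/|G|\bigr| \;\le\; \sqrt{|G|}\,\Bigl(\sum_{x \in G}\bigl|p^{*k}(x)-1/|G|\bigr|^2\Bigr)^{1/2}.
\]
This peels off a factor of $\sqrt{|G|}$ that will be cancelled by the $1/|G|$ arising from Plancherel.

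Next I would invoke the Plancherel formula for functions on $G$: for any $f:G\to\mathbb{C}$,
\[
\sum_{x\in G}|f(x)|^2 \;=\; \frac{1}{|G|}\sum_{\rho}d_{\rho}\,\mathrm{Tr}\bigl(\widehat{f}(\rho)\,\widehat{f}(\rho)^{*}\bigr),
\]
where the sum runs over all irreducible unitary representations $\rho$ of $G$. Apply this to $f = p^{*k} - U_G$. Since the Fourier transform is multiplicative under convolution, $\widehat{p^{*k}}(\rho) = \widehat{p}(\rho)^{k}$. The contribution of the trivial representation vanishes (both $\widehat{p^{*k}}$ and $\widehat{U_G}$ equal $1$ there), and for any non-trivial irreducible $\rho$ one has $\widehat{U_G}(\rho) = 0$ because the character sums to zero over $G$, so only $\widehat{p}(\rho)^{k}$ remains.

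The final input is the symmetry hypothesis $p(x)=p(x^{-1})$. From the unitarity of $\rho$, I would compute
\[
\widehat{p}(\rho)^{*} \;=\; \sum_{x\in G} \overline{p(x)}\,\rho(x)^{*} \;=\; \sum_{x\in G} p(x)\,\rho(x^{-1}) \;=\; \sum_{y\in G} p(y^{-1})\,\rho(y) \;=\; \widehat{p}(\rho),
\]
so $\widehat{p}(\rho)$ is Hermitian. Consequently $\widehat{p}(\rho)^{k}\bigl(\widehat{p}(\rho)^{k}\bigr)^{*} = \widehat{p}(\rho)^{2k}$. Substituting back into Plancherel and then into Cauchy--Schwarz, and finally squaring both sides, yields
\[
\|p^{*k}-U_G\|_{\text{TV}}^{2} \;\le\; \frac{1}{4}\sum_{\rho}^{*} d_{\rho}\,\mathrm{Tr}\bigl(\widehat{p}(\rho)^{2k}\bigr).
\]
There is no real obstacle here; the only two points to keep track of are (i) the trivial representation must be removed before applying Cauchy--Schwarz so that $\widehat{U_G}$ does not spoil the identity, and (ii) self-adjointness of $\widehat{p}(\rho)$ requires the symmetry $p(x)=p(x^{-1})$, without which one would only get $\mathrm{Tr}\bigl(\widehat{p}(\rho)^{k}\widehat{p}(\rho)^{*\,k}\bigr)$ rather than a clean $2k$-th power on the right-hand side.
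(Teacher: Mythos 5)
The paper states this lemma without proof, simply citing Diaconis's lecture notes, so there is no in-paper argument to compare against; your write-up is exactly the standard Diaconis--Shahshahani proof given in that reference (TV to $L^1$, Cauchy--Schwarz to $L^2$, Plancherel, vanishing of $\widehat{U_G}(\rho)$ on non-trivial irreducibles, and self-adjointness of $\widehat{p}(\rho)$ from $p(x)=p(x^{-1})$ to turn $\widehat{p}(\rho)^k\widehat{p}(\rho)^{*k}$ into $\widehat{p}(\rho)^{2k}$). The argument is correct as written, including the two caveats you flag; note only that the irreducibility hypothesis plays no role in this inequality.
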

\begin{proof}[Proof of Theorem \ref{thm:UB for Alt}]
We know that the trace of the $2k^{\text{th}}$ power of a matrix is the sum of $2k^{\text{th}}$ powers of its eigenvalues. Now for $\lambda\vdash n$ we can say by Corollary \ref{thm:eigenvalues combined case}, as $T_{\alpha}$ ranges over $\Ustd\cup\Ustdc$, the eigenvalues of $\widehat{P}(R)$ are
\begin{itemize}
\item $\frac{2a_n-1}{2n-3}=\frac{a_n+a_{n-1}}{2n-3}$, if $a_n=a_{n-1}+1$,
\item $-\frac{(2a_n+1)}{2n-3}=-\frac{a_n+a_{n-1}}{2n-3}$, if $a_n=a_{n-1}-1$,
\item $\pm\frac{(a_n+a_{n-1})}{2n-3}=\pm\frac{a_n+a_{n-1}}{2n-3}$ if $a_n\neq a_{n-1}\pm1\text{ and } a_{n-1}<a_n$,
\end{itemize}
where $a_{n-1}$ and $a_n$ are contents of $(n-1)$ and $n$ respectively, in $T_{\alpha}$. Lemma \ref{Upper Bound Lemma} implies:
\begin{align}\label{eq:UB1}
\begin{split}
\;4\;&||P^{*k}-U_{A_n}||^2_{\text{TV}}\leq\sum\limits_{\lambda\in \Ncpar\setminus\{(n)\}}\left(d_{\lambda}\sum\limits_{T_{\alpha}\in\Ustd\cup\Ustdc}\left(\frac{a_n+a_{n-1}}{2n-3}\right)^{2k}\right)\\
&\;\;+\sum\limits_{\lambda\in\Cpar}\left(d^+_{\lambda}\sum\limits_{T_{\alpha}\in\Ustd}\left(\frac{a_n+a_{n-1}}{2n-3}\right)^{2k}+d^-_{\lambda}\sum\limits_{T_{\alpha}\in\Ustd}\left(\frac{a_n+a_{n-1}}{2n-3}\right)^{2k}\right).
\end{split}
\end{align}
Before coming to the main part of the proof, let us consider the leading term in \eqref{eq:UB1}, which corresponds to the partition $\lambda=(n-1,1)$ or equivalently its conjugate. For the partition $\lambda=(n-1,1)$, the eigenvalues are $\frac{2n-5}{2n-3},\;-\frac{n-3}{2n-3}\text{ and }\frac{n-3}{2n-3}$ with algebraic multiplicities $n-3,\;1$ and $1$, respectively. Therefore, the term in $\sum\limits_{\lambda\in \Ncpar\setminus\{(n)\}}$ is
\begin{align*}
&\quad\;\;(n-1)\left((n-3)\left(\frac{2n-5}{2n-3}\right)^{2k}+\left(-\frac{n-3}{2n-3}\right)^{2k}+\left(\frac{n-3}{2n-3}\right)^{2k}\right)\\
&=(n-1)\left((n-3)\left(1-\frac{2}{2n-3}\right)^{2k}+2\left(\frac{n-3}{2n-3}\right)^{2k}\right)=O\left(e^{-\frac{4k}{2n-3}+2\log n}\right).
\end{align*}
Now, if $k=(n-\frac{3}{2})(\log n+c)$, then $e^{-\frac{4k}{2n-3}+2\log n}$ is $e^{-2c}$, $c>0$. We will show that this is the largest term, other terms being smaller.

For any upper standard Young tableau $T_{\alpha}$ of shape $\lambda$, if $\lambda=(\lambda_1,\dots,\lambda_r)$, then $a_{n-1}+a_n\leq 2\lambda_1-3$. Let $\mathcal{E}=e^{-\frac{1}{2}}(n-\frac{3}{2})+\frac{3}{2}$. Then the right hand side of \eqref{eq:UB1} is less than or equal to
\begin{align*}
&\sum_{\lambda\in \Ncpar\setminus\{(n)\}}\left(d_{\lambda}\sum_{T_{\alpha}\in\Ustd\cup\Ustdc}\left(\frac{2\lambda_1-3}{2n-3}\right)^{2k}\right)\\
&\quad\quad+\sum_{\lambda\in\Cpar}\left(d^+_{\lambda}\sum_{T_{\alpha}\in\Ustd}\left(\frac{ 2\lambda_1-3}{2n-3}\right)^{2k}+d^-_{\lambda}\sum_{T_{\alpha}\in\Ustd}\left(\frac{ 2\lambda_1-3}{2n-3}\right)^{2k}\right).
\end{align*}
The summands in $\sum\limits_{T_{\alpha}\in\Ustd\cup\Ustdc}$ and $\sum\limits_{T_{\alpha}\in\Ustd}$ are independent of the index of the sum. Therefore, the above expression becomes
\begin{align}\label{eq:UB1.0}
&\sum_{\lambda\in \Ncpar\setminus\{(n)\}}\left(d_{\lambda}^2\left(\frac{2\lambda_1-3}{2n-3}\right)^{2k}\right)+\sum_{\lambda\in\Cpar}\left(\left(d^+_{\lambda}+d^-_{\lambda}\right)\frac{d_{\lambda}}{2}\left(\frac{ 2\lambda_1-3}{2n-3}\right)^{2k}\right)\nonumber\\
&=\sum_{\lambda\in \Ncpar\setminus\{(n)\}}d_{\lambda}^2\left(\frac{2\lambda_1-3}{2n-3}\right)^{2k}+\sum_{\lambda\in\Cpar}\frac{d^2_{\lambda}}{2}\left(\frac{ 2\lambda_1-3}{2n-3}\right)^{2k}.
\end{align} 
Now adding the non-negative quantities $\displaystyle\sum_{\substack{\lambda\vdash n\\\lambda\notin\Ncpar\cup\Cpar}}d_{\lambda}^2\left(\frac{2\lambda_1-3}{2n-3}\right)^{2k}$ and $\sum\limits_{\lambda\in\Cpar}\frac{d^2_{\lambda}}{2}\left(\frac{ 2\lambda_1-3}{2n-3}\right)^{2k}$ to \eqref{eq:UB1.0}, we obtain $\displaystyle\sum_{\substack{\lambda\vdash n\\\lambda\neq(n)}}d_{\lambda}^2\left(\frac{2\lambda_1-3}{2n-3}\right)^{2k}$. Therefore, the expression in \eqref{eq:UB1.0} is less than or equal to
\begin{align}\label{eq: from this onward almost same pf}
\sum_{\substack{\lambda\vdash n\\\lambda\neq(n)}}d_{\lambda}^2\left(\frac{\lambda_1-\frac{3}{2}}{n-\frac{3}{2}}\right)^{2k}\nonumber&= \sum_{\substack{\lambda\vdash n\\\lambda\neq(n)\\\lambda_1>\mathcal{E}}}d_{\lambda}^2\left(\frac{\lambda_1-\frac{3}{2}}{n-\frac{3}{2}}\right)^{2k}+ \sum_{\substack{\lambda\vdash n\\\lambda_1\leq \mathcal{E}}}d_{\lambda}^2\left(\frac{\lambda_1-\frac{3}{2}}{n-\frac{3}{2}}\right)^{2k}\nonumber\\
&\leq \sum_{\substack{\lambda\vdash n\\\lambda_1=t}}^{n-1}d_{\lambda}^2\left(\frac{\lambda_1-\frac{3}{2}}{n-\frac{3}{2}}\right)^{2k}+\sum_{\substack{\lambda\vdash n\\\lambda_1\leq \mathcal{E}}}d_{\lambda}^2e^{-k}.
\end{align}
Here, $t$ is the smallest integer greater than or equal to $\mathcal{E}$. Given $\lambda=(\lambda_1,\dots,\lambda_r)\vdash n$, we have $d_{\lambda}\leq {n\choose\lambda_1}d_{\mu}$, for $\mu\vdash(n-\lambda_1)$  such that largest part of $\mu\leq\lambda_1$. Thus expression \eqref{eq: from this onward almost same pf} is less than or equal to
\begin{align}\label{eq: UB1.1}
&\quad\sum\limits_{\lambda_1=t}^{n-1} \sum_{\substack{\mu\vdash (n-\lambda_1)\\\text{ largest part of }\mu\leq\lambda_1}}{n\choose\lambda_1}^2d_{\mu}^2\left(\frac{\lambda_1-\frac{3}{2}}{n-\frac{3}{2}}\right)^{2k}+\sum_{\substack{\lambda\vdash n\\\lambda_1\leq \mathcal{E}}}d_{\lambda}^2e^{-k}\nonumber\\
&\leq\sum\limits_{\lambda_1=t}^{n-1}{n\choose\lambda_1}^2\left(\frac{\lambda_1-\frac{3}{2}}{n-\frac{3}{2}}\right)^{2k} \sum_{\mu\vdash (n-\lambda_1)}d_{\mu}^2+\sum_{\substack{\lambda\vdash n}}d_{\lambda}^2e^{-k}\nonumber\\
&= \sum\limits_{m=1}^{n-t}{n\choose m}^2\left(1-\frac{m}{n-1.5}\right)^{2k}m! +n!e^{-k},\quad\text{ where }m=n-\lambda_1.
\end{align}
Since $1\geq\frac{m}{n-1.5}\geq 0$ and $1-x\leq e^{-x}$ for all non-negative real $x$, expression \eqref{eq: UB1.1} is less than or equal to
\begin{align*}
&\quad\sum\limits_{m=1}^{n-t}\frac{\left(n(n-1)\dots(n-m+1)\right)^2}{m!}e^{-\frac{2km}{n-1.5}}+n!e^{-k}\\
&\leq\sum\limits_{m=1}^{n-t}\frac{n^{2m}}{m!}e^{-\frac{2km}{n-1.5}}+n!e^{-k} < e^{n^2e^{-\frac{2k}{n-1.5}}}-1+n!e^{-k}.
\end{align*}
Therefore we have
\begin{equation}\label{eq:UB1.2}
4\;||P^{*k}-U_{A_n}||^2_{\text{TV}}<\;e^{n^2e^{-\frac{2k}{n-1.5}}}-1+n!e^{-k}.
\end{equation}
Now for $c>0$ and $k\geq(n-\frac{3}{2})(\log n+c)$, we have
\begin{equation*}
4\;||P^{*k}-U_{A_n}||^2_{\text{TV}}
\leq e^{e^{-2c}}-1+\frac{n!e^{-(n-1.5)c}}{n^{(n-1.5)}}
\leq 2e^{-2c}+\frac{n!e^{-(n-1.5)c}}{n^{(n-1.5)}}.
\end{equation*}
Therefore, $||P^{*k}-U_{A_n}||_{\text{TV}}<\frac{1}{\sqrt{2}}e^{-c}+\frac{1}{2}\sqrt{\frac{n!e^{-(n-1.5)c}}{n^{(n-1.5)}}}$ (since $\sqrt{a+b}<\sqrt{a}+\sqrt{b}$ for any two positive real numbers $a$ and $b$). This proves the first part. 

Now let $\epsilon\in (0,1)$ and $k_n=\lfloor(1+\epsilon)(n-\frac{3}{2})\log n\rfloor$. Then \eqref{eq:UB1.2} implies,
\begin{equation}\label{eq: UB 1.3}
0\leq4\;||P^{*k_n}-U_{A_n}||^2_{\text{TV}}<\;e^{n^2e^{-\frac{2k_n}{n-1.5}}}-1+n!e^{-k_n}
\leq e^{n^{-2\epsilon}e^{\frac{2}{n-1.5}}}-1+\frac{n!e}{n^{(1+\epsilon)(n-1.5)}},
\end{equation}
the last inequality of \eqref{eq: UB 1.3} holds because of $k_n+1\geq(1+\epsilon)(n-\frac{3}{2})\log n$. Thus the second part follows from the fact that the right hand side of  \eqref{eq: UB 1.3} tends to $0$ as $n\rightarrow\infty$.
\end{proof}
\section{Lower Bound for total variation distance}\label{sec:lower bound}
In this section, we will find a lower bound of the total variation distance $||P^{*k}-U_{A_n}||_{\text{TV}}$ when $k=(n-\frac{3}{2})(\log n+c)$ for $c\ll 0$. To prove the results, we consider the slow term in the upper bound lemma \cite{Diaconis Note1}. The slow term comes from the $(n-1)$-dimensional irreducible representation of $A_n$. In particular, we will define a random variable on $A_n$ giving the number of fixed points of even permutations. This random variable can be viewed as the character of the restriction of \emph{defining} representation $\rho^{\text{def}}$ to $A_n$. The restriction of defining representation decomposes into two irreducible representations of $A_n$ namely the trivial representation and the $(n-1)$-dimensional representation. Thus the character of the $(n-1)$-dimensional irreducible representation plays an important role in this section.

We have seen all the irreducible representation of $A_n$ in Section \ref{sec:representation}. We also know from \cite[Theorem 2.4.6]{Sagan} that all $\lambda\vdash n$ give all non-isomorphic irreducible representations $S^{\lambda}$ of $S_n$. Also \cite[Theorem 4.6.5]{Amri} says that the restriction of the irreducible representation $S^{\lambda}$ of $S_n$ to $A_n$ is an irreducible representation of $A_n$ if $\lambda\neq\lambda^{\prime}$ and a direct sum of two non-isomorphic irreducible representations of $A_n$ if $\lambda=\lambda^{\prime}$. Let $\textbf{[n]}=\{\textbf{1,}\dots,\textbf{ n}\}$ and $\mathbb{C}\textbf{[n]}=\{c_1\textbf{1}+c_2\textbf{2}+\cdots+c_n\textbf{n}\mid c_i\in\mathbb{C}\text{ for all }i\}$. Then we define two representations $\rho^{\text{def}}: S_n\rightarrow GL(\mathbb{C}\textbf{[n]})$ and $\rho^{\text{def}}\otimes\rho^{\text{def}}: S_n\rightarrow GL(\mathbb{C}\textbf{[n]}\otimes\mathbb{C}\textbf{[n]})$ of $S_n$ as follows:
\begin{align*}
&\rho^{\text{def}}(\pi)\left(c_1\textbf{1}+c_2\textbf{2}+\cdots+c_n\textbf{n}\right)=c_1\pi(\textbf{1})+c_2\pi(\textbf{2})+\cdots+c_n\pi(\textbf{n})\text{ for }\pi\in S_n,\text{ and }\\
&(\rho^{\text{def}}\otimes\rho^{\text{def}})(\pi)\left(v_1\otimes v_2\right)=\rho^{\text{def}}(\pi)(v_1)\otimes\rho^{\text{def}}(\pi)(v_2) \text{ for }\pi\in S_n,\; v_1\otimes v_2 \in \mathbb{C}\textbf{[n]}\otimes\mathbb{C}\textbf{[n]}.
\end{align*}
For a more general definition of the tensor product of two representations see \cite[Section 1.5]{Serre}. Let $\psi^{\lambda}$ denote the irreducible character \cite[Section 2.1]{Serre} of $S_n$ corresponding to the irreducible representation given by the partition $\lambda$ of $n$. If $\lambda\vdash n$ is non-self-conjugate, then we denote the irreducible character of $A_n$ corresponding to $\lambda$ by $\chi^{\lambda}$ and if $\lambda\vdash n$ is self-conjugate, then we denote the irreducible characters of $A_n$ corresponding to $\lambda$ by $\chi^{\lambda}_{\pm}$. Also, let $\varepsilon$ be the one-dimensional sign character of $S_n$. Let us recall that an $n$-dimensional linear representation $\rho$ over $\mathbb{C}$ of a finite group $G$ can also be defined by a homomorphism from $G$ to $M^*_n(\mathbb{C})$ (the multiplicative group of all $n\times n$ invertible complex matrices). We now define induced representation using this definition of representation.
\begin{defn}
Let $H$ be a subgroup of $G$ and $g_1H,\;g_2H,\;\dots,\;g_{\ell} H$ are all the distinct left cosets of $H$ in $G$. Let $\rho$ be a representation of $H$. The induced representation $\rho\uparrow_H^G$ is defined by
\begin{equation*}
\rho\uparrow_H^G(g)=\left(
{\begin{array}{cccc}
\rho(g_1^{-1}gg_1) & \rho(g_1^{-1}gg_2) & \dots & \rho(g_1^{-1}gg_{\ell})\\
\rho(g_2^{-1}gg_1) & \rho(g_2^{-1}gg_2) & \dots & \rho(g_2^{-1}gg_{\ell})\\
\vdots & \vdots & \ddots & \vdots\\
\rho(g_{\ell}^{-1}gg_1) & \rho(g_{\ell}^{-1}gg_2) & \dots & \rho(g_{\ell}^{-1}gg_{\ell})
\end{array}}
\right),
\end{equation*}
where $\rho(g)$ is the zero matrix if $g\notin H$. We denote the character of the induced representation $\rho\uparrow_H^G$ by $\chi\uparrow_H^G$, where $\chi$ is the character of $\rho$. Note that the dimension of the induced representation $\rho\uparrow_H^G$ is $[G:H]$dim$(\rho)$. We will abbreviate $\chi\uparrow_H^G$ to $\chi\uparrow^G$.
\end{defn}
Let us also recall that $\chi\downarrow_H^G$ denotes the restriction of the character $\chi$ to $H$, where $\chi$ is the character of a representation of the group $G$. We will abbreviate $\chi\downarrow_H^G$ to $\chi\downarrow_H$.
\begin{lem}\label{lem:Restriction}
If $\lambda$ be a non-self-conjugate partition of $n$, then $\psi^{\lambda}\downarrow_{A_n}=\chi^{\lambda}=\chi^{\lambda^{\prime}}$. If $\lambda$ is self-conjugate, then we have $\psi^{\lambda}\downarrow_{A_n}=\chi^{\lambda}_{+}+\chi^{\lambda}_{-}$.
\end{lem}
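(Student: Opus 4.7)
The plan is to exploit the interaction between the sign character of $S_n$ and conjugation of partitions, combined with the branching rule from $S_n$ to $A_n$ already invoked in the paper.

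First I would recall the standard identity $S^{\lambda^{\prime}}\cong S^{\lambda}\otimes\varepsilon$ (where $\varepsilon$ is the one-dimensional sign representation of $S_n$). Taking characters, this reads $\psi^{\lambda^{\prime}}(\pi)=\varepsilon(\pi)\psi^{\lambda}(\pi)$ for every $\pi\in S_n$. Since $\varepsilon\equiv 1$ on $A_n$, restricting this identity to $A_n$ immediately yields $\psi^{\lambda^{\prime}}\downarrow_{A_n}=\psi^{\lambda}\downarrow_{A_n}$ as class functions on $A_n$; call this common restriction $\chi$.

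Next I would apply the branching rule from $S_n$ to $A_n$ stated in the paper via \cite[Theorem 4.6.5]{Amri}: $S^{\lambda}\downarrow_{A_n}$ is irreducible when $\lambda\neq\lambda^{\prime}$, and decomposes as a direct sum of two non-isomorphic irreducibles when $\lambda=\lambda^{\prime}$. In the non-self-conjugate case, $\chi$ is therefore an irreducible character of $A_n$, and since the paper defines $D_{\lambda}$ to be the (unique up to isomorphism) irreducible of $A_n$ attached to the pair $\{\lambda,\lambda^{\prime}\}$, this irreducible character is precisely $\chi^{\lambda}$. Combining with the equality from the previous paragraph gives $\psi^{\lambda}\downarrow_{A_n}=\chi^{\lambda}=\chi^{\lambda^{\prime}}$, which is the first assertion. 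In the self-conjugate case, the branching rule directly produces $S^{\lambda}\downarrow_{A_n}\cong D^{+}_{\lambda}\oplus D^{-}_{\lambda}$, whose character is exactly $\chi^{\lambda}_{+}+\chi^{\lambda}_{-}$.

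The only subtle point — and what I would treat as the one step requiring care rather than a genuine obstacle — is the identification of the abstract irreducibles $D_{\lambda}$ and $D^{\pm}_{\lambda}$ constructed in Section \ref{sec:representation} via the Gelfand--Tsetlin basis with the constituents of $S^{\lambda}\downarrow_{A_n}$ produced by the branching rule. This identification is built into the conventions from Ruff \cite{Ruff} adopted in the paper: the basis of $D_{\lambda}$ indexed by $\Ustd\cup\Ustdc$ (respectively the bases of $D^{\pm}_{\lambda}$ indexed by $\Ustd$) is obtained by restricting the Young seminormal basis of $S^{\lambda}$ to $A_n$, which exhibits $D_{\lambda}$ (resp.\ $D^{+}_{\lambda}\oplus D^{-}_{\lambda}$) as $S^{\lambda}\downarrow_{A_n}$. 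With this identification in hand, the character equalities above finish the proof.
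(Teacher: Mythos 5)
Your argument is correct and is essentially the paper's: the paper simply cites \cite[Theorem 4.6.5]{Amri} and states that the lemma follows directly, which is exactly the branching rule you invoke. Your additional details --- the sign-twist identity $\psi^{\lambda^{\prime}}=\varepsilon\cdot\psi^{\lambda}$ giving $\chi^{\lambda}=\chi^{\lambda^{\prime}}$, and the identification of $D_{\lambda}$, $D^{\pm}_{\lambda}$ with the constituents of $S^{\lambda}\downarrow_{A_n}$ --- are a faithful unpacking of what the paper leaves implicit, not a different route.
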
\
\begin{proof}
The proof of this lemma follows directly from \cite[Theorem 4.6.5]{Amri}.
\end{proof}
\begin{prop}\label{prop:induced NC}
For non-self-conjugate $\lambda\vdash n$, we have $\chi^{\lambda}\uparrow^{S_n}=\psi^{\lambda}+\psi^{\lambda^{\prime}}$.
\end{prop}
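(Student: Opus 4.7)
The natural tool is Frobenius reciprocity, combined with Lemma \ref{lem:Restriction}. The plan is to compute the inner product of $\chi^{\lambda}\uparrow^{S_n}$ with every irreducible character $\psi^{\mu}$ of $S_n$ and show it equals $1$ exactly when $\mu\in\{\lambda,\lambda'\}$ and $0$ otherwise.

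First I would invoke Frobenius reciprocity to write
\[
\langle \chi^{\lambda}\uparrow^{S_n},\,\psi^{\mu}\rangle_{S_n}
=\langle \chi^{\lambda},\,\psi^{\mu}\downarrow_{A_n}\rangle_{A_n}
\]
for every $\mu\vdash n$. Then I would split into two cases using Lemma \ref{lem:Restriction}. If $\mu$ is non-self-conjugate, then $\psi^{\mu}\downarrow_{A_n}=\chi^{\mu}=\chi^{\mu'}$ is a single irreducible character of $A_n$, and since the $\chi^{\nu}$ (ranging over non-self-conjugate $\nu$ modulo the identification $\chi^{\nu}=\chi^{\nu'}$) are pairwise orthogonal, the inner product is $1$ exactly when $\mu\in\{\lambda,\lambda'\}$ and $0$ otherwise. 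If $\mu$ is self-conjugate, then $\psi^{\mu}\downarrow_{A_n}=\chi^{\mu}_{+}+\chi^{\mu}_{-}$ is a sum of two irreducible characters of $A_n$, each distinct from the non-self-conjugate character $\chi^{\lambda}$, so the inner product is $0$.

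Combining the two cases shows that $\chi^{\lambda}\uparrow^{S_n}$ is a class function of $S_n$ whose decomposition into irreducibles has coefficient $1$ at both $\psi^{\lambda}$ and $\psi^{\lambda'}$ and $0$ elsewhere; therefore $\chi^{\lambda}\uparrow^{S_n}=\psi^{\lambda}+\psi^{\lambda'}$. As a sanity check I would verify dimensions: $\dim\chi^{\lambda}\uparrow^{S_n}=[S_n:A_n]\,d_{\lambda}=2d_{\lambda}$, while $\dim\psi^{\lambda}+\dim\psi^{\lambda'}=d_{\lambda}+d_{\lambda'}=2d_{\lambda}$ since conjugate partitions yield representations of equal dimension.

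I do not expect any serious obstacle here: the content is a standard application of Frobenius reciprocity once Lemma \ref{lem:Restriction} is available. The only point worth being careful about is the case analysis on whether $\mu$ is self-conjugate, and the identification $\chi^{\lambda}=\chi^{\lambda'}$ which ensures both $\psi^{\lambda}$ and $\psi^{\lambda'}$ contribute with multiplicity $1$ rather than $\lambda'$ being somehow missed.
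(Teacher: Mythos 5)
Your proof is correct, but it takes a different route from the paper's. The paper proves this proposition directly: it writes $S_n$ as the disjoint union of the two cosets $A_n$ and $(1,2)A_n$ and then reads the result off from the explicit matrix definition of the induced representation together with a cited theorem of Prasad \cite[Theorem 4.4.2]{Amri}. You instead pair $\chi^{\lambda}\uparrow^{S_n}$ against every $\psi^{\mu}$ via Frobenius reciprocity and use Lemma \ref{lem:Restriction} plus orthogonality of the irreducible characters of $A_n$ to pin down the multiplicities. Both arguments are sound, and there is no circularity in yours since Lemma \ref{lem:Restriction} is established independently of the proposition. What your version buys is self-containedness and uniformity: it reduces the claim to facts already recorded in the paper (the classification of the $\chi^{\nu}$ and $\chi^{\nu}_{\pm}$ as a complete list of pairwise distinct irreducibles of $A_n$), it handles the self-conjugate $\mu$ case explicitly, and it is exactly the technique the paper itself deploys later in Proposition \ref{prop:linearization of squared character}; the dimension check $2d_{\lambda}=d_{\lambda}+d_{\lambda^{\prime}}$ is a nice confirmation. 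What the paper's version buys is brevity, at the cost of outsourcing the substance to the cited theorem and the reader's familiarity with the induced-representation formula.
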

\begin{proof}
$S_n$ can be written as the disjoint union of $A_n$ and $(1,2)A_n$ (two distinct left cosets of $A_n$ in $S_n$). Now the proposition follows from \cite[Theorem 4.4.2]{Amri} and definition of induced representation. 
\end{proof}
\begin{prop}\label{prop:induced Conj}
For self-conjugate $\lambda\vdash n$, we have $\chi_{\pm}^{\lambda}\uparrow^{S_n}=\psi^{\lambda}$.
\end{prop}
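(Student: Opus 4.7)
The plan is to use Frobenius reciprocity together with the restriction identity in the preceding lemma, and then verify the result with a dimension count. Since the irreducible characters $\psi^{\mu}$ of $S_n$, for $\mu \vdash n$, form an orthonormal basis of the space of class functions on $S_n$, it suffices to compute $\langle \chi^{\lambda}_{\pm}\uparrow^{S_n}, \psi^{\mu}\rangle_{S_n}$ for every $\mu\vdash n$.

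First, I would apply Frobenius reciprocity:
\[
\langle \chi^{\lambda}_{\pm}\uparrow^{S_n}, \psi^{\mu}\rangle_{S_n} \;=\; \langle \chi^{\lambda}_{\pm}, \psi^{\mu}\downarrow_{A_n}\rangle_{A_n}.
\]
Then I would plug in the restriction formula from Lemma \ref{lem:Restriction}. If $\mu$ is non-self-conjugate, then $\psi^{\mu}\downarrow_{A_n} = \chi^{\mu}$, and since $\chi^{\mu}$ and $\chi^{\lambda}_{\pm}$ are distinct irreducible characters of $A_n$ (the former is a non-self-conjugate-type irreducible, the latter is one of the paired self-conjugate-type irreducibles), orthonormality gives inner product $0$. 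If $\mu$ is self-conjugate, then $\psi^{\mu}\downarrow_{A_n} = \chi^{\mu}_{+} + \chi^{\mu}_{-}$; the inner product with $\chi^{\lambda}_{\pm}$ is $1$ when $\mu = \lambda$ and $0$ when $\mu \neq \lambda$.

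Thus $\chi^{\lambda}_{\pm}\uparrow^{S_n}$ contains $\psi^{\lambda}$ with multiplicity $1$ and no other irreducible character of $S_n$ with positive multiplicity. To rule out the possibility that the induced character equals $\psi^{\lambda}$ plus zero (which it does here, but one must verify there is no remaining dimension to account for), I would do a dimension check:
\[
\dim\bigl(\chi^{\lambda}_{\pm}\uparrow^{S_n}\bigr) \;=\; [S_n : A_n]\cdot d^{\pm}_{\lambda} \;=\; 2\cdot \tfrac{1}{2}|\mathrm{Std}(\lambda)| \;=\; |\mathrm{Std}(\lambda)| \;=\; \dim(\psi^{\lambda}),
\]
using the fact that for self-conjugate $\lambda$ we have $d^{+}_{\lambda} = d^{-}_{\lambda} = \tfrac{1}{2}|\mathrm{Std}(\lambda)|$ as established earlier. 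Combining the Frobenius-reciprocity calculation with this dimension equality yields $\chi^{\lambda}_{\pm}\uparrow^{S_n} = \psi^{\lambda}$.

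There is no real obstacle: the only subtle point is being careful that when $\mu$ is self-conjugate and $\mu \neq \lambda$, the characters $\chi^{\mu}_{\pm}$ are genuinely different irreducibles from $\chi^{\lambda}_{\pm}$, which is immediate from the classification of irreducible representations of $A_n$ recalled in Section \ref{sec:representation}.
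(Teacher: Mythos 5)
Your proof is correct, but it takes a different route from the paper. The paper argues directly from the definition of induction: it writes $S_n=A_n\cup(1,2)A_n$ and invokes a cited result of Prasad (essentially the Clifford-theory fact that conjugation by $(1,2)$ swaps $D^+_{\lambda}$ and $D^-_{\lambda}$) to evaluate the induced character pointwise. You instead dualize: you expand $\chi^{\lambda}_{\pm}\uparrow^{S_n}$ in the orthonormal basis $\{\psi^{\mu}\}_{\mu\vdash n}$, convert each coefficient via Frobenius reciprocity into $\langle\chi^{\lambda}_{\pm},\psi^{\mu}\downarrow_{A_n}\rangle_{A_n}$, and read off the answer from Lemma \ref{lem:Restriction} together with the pairwise distinctness of the irreducible characters of $A_n$. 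What your approach buys is self-containedness relative to the paper: once Lemma \ref{lem:Restriction} is in hand, no further external input about induction from index-two subgroups is needed, whereas the paper leans on the outside citation a second time. One small remark: the concluding dimension count is redundant rather than necessary --- since the $\psi^{\mu}$ form a basis of class functions and you have computed every coefficient of the (virtual-character) expansion of $\chi^{\lambda}_{\pm}\uparrow^{S_n}$, the identity $\chi^{\lambda}_{\pm}\uparrow^{S_n}=\psi^{\lambda}$ already follows; the dimension check is a harmless sanity verification, not a needed step to ``rule out'' anything.
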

\begin{proof}
$A_n$ and $(1,2)A_n$ are two distinct left cosets of $A_n$ in $S_n$. Therefore the proposition follows from the definition of induced representation and  \cite[Theorem 4.4.2]{Amri}.
\end{proof}
\begin{prop}\label{prop:linearization of squared character}
Let us recall that $\chi^{\emph{def}}$ is the character of the defining representation $\rho^{\emph{def}}$ of $S_n$. Then
\[\left(\chi^{\emph{def}}\otimes\chi^{\emph{def}}\right)\downarrow_{A_n}=2\;\;\chi^{(n)}+3\;\;\chi^{(n-1,1)}+\;\chi^{(n-2,2)}+\;\chi^{(n-2,1,1)}.\]
\end{prop}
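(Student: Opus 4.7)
The plan is to first decompose $\rho^{\text{def}}\otimes\rho^{\text{def}}$ as a representation of $S_n$ and only at the very end restrict to $A_n$ using Lemma \ref{lem:Restriction}. The key observation is that $\rho^{\text{def}}\otimes\rho^{\text{def}}$ is naturally isomorphic to the permutation representation of $S_n$ on the set $\textbf{[n]}\times\textbf{[n]}$, since on the basis $\{\textbf{i}\otimes\textbf{j}\}$ the action sends $\textbf{i}\otimes\textbf{j}\mapsto\pi(\textbf{i})\otimes\pi(\textbf{j})$. Equivalently, the character value is $\chi^{\text{def}}(\pi)^2=f(\pi)^2$, where $f(\pi)$ is the number of fixed points of $\pi$.

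Next I would split $\textbf{[n]}\times\textbf{[n]}$ into its two $S_n$-orbits: the diagonal $\Delta=\{(\textbf{i},\textbf{i})\}$ and the off-diagonal $\Delta^c=\{(\textbf{i},\textbf{j}):i\neq j\}$. The permutation representation on $\Delta$ is isomorphic to $\rho^{\text{def}}$ itself, which by the standard decomposition equals $S^{(n)}\oplus S^{(n-1,1)}$. The permutation representation on $\Delta^c$ is isomorphic to the Young permutation module $M^{(n-2,1,1)}$, because the stabilizer of the ordered pair $(\textbf{1},\textbf{2})$ is $\mathrm{Sym}(\{3,\dots,n\})$. By Young's rule (computing the Kostka numbers $K_{\lambda,(n-2,1,1)}$), one obtains
\[
M^{(n-2,1,1)}=S^{(n)}\oplus 2\,S^{(n-1,1)}\oplus S^{(n-2,2)}\oplus S^{(n-2,1,1)}.
\]
The Kostka numbers here are small enough to verify directly by enumerating semistandard tableaux (or alternatively one can compute the inner products $\langle f(\pi)(f(\pi)-1),\psi^\lambda\rangle$ using known character formulas). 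Summing the two orbit contributions yields
\[
\chi^{\text{def}}\otimes\chi^{\text{def}}=2\,\psi^{(n)}+3\,\psi^{(n-1,1)}+\psi^{(n-2,2)}+\psi^{(n-2,1,1)}.
\]

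Finally I would restrict both sides to $A_n$. For $n\geq 5$ each of the four partitions $(n),(n-1,1),(n-2,2),(n-2,1,1)$ is non-self-conjugate (their conjugates are $(1^n),(2,1^{n-2}),(2,2,1^{n-4}),(3,1^{n-3})$, all distinct from the original), so Lemma \ref{lem:Restriction} gives $\psi^\lambda\downarrow_{A_n}=\chi^\lambda$ for each of them, producing the claimed identity. The main obstacle is the Young's-rule step, but once one recognizes that the off-diagonal part is a permutation module on a standard Young subgroup quotient the decomposition is immediate; an alternative route avoiding Young's rule altogether is to verify the identity by computing the inner product of $f(\pi)^2$ against each irreducible character of $S_n$ and showing the multiplicities vanish outside the four listed partitions.
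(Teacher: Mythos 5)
Your proof is correct, but it takes a genuinely different route from the paper's on both halves of the argument. For the $S_n$-level identity $\chi^{\text{def}}\otimes\chi^{\text{def}}=2\,\psi^{(n)}+3\,\psi^{(n-1,1)}+\psi^{(n-2,2)}+\psi^{(n-2,1,1)}$, the paper simply cites Sagan (Example 2.1.8) and James--Kerber (Lemma 2.9.16), whereas you derive it from scratch by viewing $\rho^{\text{def}}\otimes\rho^{\text{def}}$ as the permutation representation on $\textbf{[n]}\times\textbf{[n]}$, splitting into the diagonal orbit (giving $\rho^{\text{def}}$) and the off-diagonal orbit (giving the Young permutation module $M^{(n-2,1,1)}$), and applying Young's rule; your Kostka numbers $1,2,1,1$ are correct. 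For the passage to $A_n$, the paper computes, via Frobenius reciprocity together with Propositions \ref{prop:induced NC} and \ref{prop:induced Conj}, the inner product of every irreducible $A_n$-character against $(\chi^{\text{def}}\otimes\chi^{\text{def}})\downarrow_{A_n}$, checking in particular that the characters $\chi^{\lambda}_{\pm}$ attached to self-conjugate $\lambda$ do not occur; you instead restrict the $S_n$-decomposition term by term using Lemma \ref{lem:Restriction}, which is shorter and reaches the same conclusion since restriction of characters is additive. Your route buys self-containedness and a cleaner restriction step; the paper's route avoids Young's rule entirely, and its Frobenius-reciprocity computation doubles as an explicit check that no $\chi^{\lambda}_{\pm}$ sneaks in. One small correction: your claim that all four partitions are non-self-conjugate for $n\geq 5$ fails at $n=5$, where $(n-2,1,1)=(3,1,1)$ is self-conjugate (and at $n=4$ the partition $(2,2)$ is self-conjugate), so the identity as written should be read as a statement for $n\geq 6$; the paper's own ``$=0$'' computation for self-conjugate $\lambda$ has the same blind spot at $n=5$, and the issue is harmless for the asymptotic results this proposition feeds into.
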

\begin{proof}
\cite[Example 2.1.8]{Sagan} and \cite[Lemma 2.9.16]{JK} gives,
\begin{equation}\label{eq:linearization for symm}
\left(\chi^{\text{def}}\otimes\chi^{\text{def}}\right)=2\;\;\psi^{(n)}+3\;\;\psi^{(n-1,1)}+\;\psi^{(n-2,2)}+\;\psi^{(n-2,1,1)}.
\end{equation}
For any non-self-conjugate $\lambda\vdash n$, Frobenius Reciprocity \cite[Theorem 1.12.6]{Sagan} implies
\begin{align}\label{eq: Linearization of character}
&\quad\left<\;\chi^{\lambda},\;\left(\chi^{\text{def}}\otimes\chi^{\text{def}}\right)\downarrow_{A_n}\right>\nonumber\\
&=\left<\;\chi^{\lambda}\uparrow^{S_n},\;\chi^{\text{def}}\otimes\chi^{\text{def}}\right>\nonumber\\
&=\left<\;\chi^{\lambda}\uparrow^{S_n},\;2\;\;\psi^{(n)}+3\;\;\psi^{(n-1,1)}+\;\psi^{(n-2,2)}+\;\psi^{(n-2,1,1)}\right>,\quad\text{ by \eqref{eq:linearization for symm}}\nonumber\\
&=\left<\;\psi^{\lambda}+\psi^{\lambda^{\prime}},\;2\;\;\psi^{(n)}+3\;\;\psi^{(n-1,1)}+\;\psi^{(n-2,2)}+\;\psi^{(n-2,1,1)}\right>,\quad\text{ by Proposition \ref{prop:induced NC}}.
\end{align}
Now using orthonormality of  irreducible characters of $S_n$, expression \eqref{eq: Linearization of character} becomes
\[
\begin{cases}
2,\text{ if }\lambda=(n)\text{ or }(1^n),\\
3,\text{ if }\lambda=(n-1,1)\text{ or }(2,1^{n-1}),\\
1,\text{ if }\lambda=(n-2,2)\text{ or }(2^2,1^{n-4}),\\
1,\text{ if }\lambda=(n-2,1,1)\text{ or }(3,1^{n-3}).
\end{cases}
\]
Again, for any self-conjugate $\lambda\vdash n$, by Frobenius Reciprocity, we have
\begin{align*}
&\left<\;\chi_{\pm}^{\lambda},\;\left(\chi^{\text{def}}\otimes\chi^{\text{def}}\right)\downarrow_{A_n}\right>\\
&=\left<\;\chi_{\pm}^{\lambda}\uparrow^{S_n},\;\chi^{\text{def}}\otimes\chi^{\text{def}}\right>\\
&=\left<\;\chi_{\pm}^{\lambda}\uparrow^{S_n},\;2\;\;\psi^{(n)}+3\;\;\psi^{(n-1,1)}+\;\psi^{(n-2,2)}+\;\psi^{(n-2,1,1)}\right>,\quad\text{ by \eqref{eq:linearization for symm}}\\
&=\left<\;\psi^{\lambda},\;2\;\;\psi^{(n)}+3\;\;\psi^{(n-1,1)}+\;\psi^{(n-2,2)}+\;\psi^{(n-2,1,1)}\right>,\quad\text{ by Proposition \ref{prop:induced Conj}}\\
&=0,\quad\text{ using orthonormality of irreducible characters of }S_n.
\end{align*}
Thus the proposition follows.
\end{proof}
\begin{lem}\label{lem: diagonal for defining action by A_n}
For any $i\in\{1,\dots,n\}$, the number of even permutations $\pi$ in $A_n$ which fixes $i\;($i.e.  $\pi(i)=i)$ is $\frac{1}{2}(n-1)!$.
\end{lem}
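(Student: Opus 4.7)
The plan is to use a direct subgroup/orbit-stabilizer argument; the statement is essentially that the stabilizer in $A_n$ of a point has index $n$.

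First I would identify the set $H_i = \{\pi \in S_n : \pi(i) = i\}$ as a subgroup of $S_n$. The map that ignores the fixed coordinate $i$ identifies $H_i$ with the symmetric group on the remaining $n-1$ letters, giving $H_i \cong S_{n-1}$ and $|H_i| = (n-1)!$. Crucially, for $\pi \in H_i$, the sign of $\pi$ viewed in $S_n$ agrees with its sign viewed as a permutation of $\{1,\dots,n\}\setminus\{i\}$, since a cycle decomposition of $\pi$ (with the singleton $(i)$ omitted) is the same in both groups.

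Next I would count the even elements of $H_i$. The set of even permutations in $H_i$ is exactly $H_i \cap A_n$, which under the identification $H_i \cong S_{n-1}$ corresponds to $A_{n-1}$. For $n \geq 3$ the sign homomorphism $\varepsilon : S_{n-1} \to \{\pm 1\}$ is surjective, so $|A_{n-1}| = \tfrac{1}{2}(n-1)!$, which gives the claim. (The case $n = 2$ is trivial because the only element of $A_2$ is the identity, and one would handle the two small cases separately if needed, or restrict to $n \geq 3$ as the application in the paper requires.)

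As an alternative I would note the statement also drops out of orbit–stabilizer: since $A_n$ acts transitively on $\{1,\dots,n\}$ for $n \geq 3$, the orbit of $i$ has size $n$, hence the stabilizer $\mathrm{Stab}_{A_n}(i)$ has order $|A_n|/n = \tfrac{n!/2}{n} = \tfrac{1}{2}(n-1)!$. There is no real obstacle here; the only subtlety is confirming that the sign of a permutation fixing $i$ is the same when regarded in $S_n$ or $S_{n-1}$, which is immediate from the cycle decomposition.
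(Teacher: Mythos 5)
Your proposal is correct. Your main argument and the paper's proof rest on the same underlying fact---that exactly half of the $(n-1)!$ permutations fixing $i$ are even---but they package it differently: the paper exhibits an explicit bijection from $\mathcal{A}_i=\{\pi\in A_n\mid\pi(i)=i\}$ to its complement in $\mathcal{S}_i=\{\pi\in S_n\mid\pi(i)=i\}$ by right-multiplying by a fixed transposition $(j,k)$ with $j,k\neq i$, whereas you identify the stabilizer with $S_{n-1}$, check that sign is preserved under this identification, and quote $|A_{n-1}|=\tfrac{1}{2}(n-1)!$ (which is itself usually proved by exactly the paper's bijection). Both routes implicitly need $n\geq 3$ (the paper to choose $j\neq k$ outside $\{i\}$, you for surjectivity of the sign map on $S_{n-1}$), which is harmless here. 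Your orbit--stabilizer alternative is a genuinely different and cleaner structural argument: it buys brevity and avoids any sign bookkeeping, at the cost of needing transitivity of $A_n$ on $\{1,\dots,n\}$, which again holds only for $n\geq 3$. Your remark that the sign of $\pi\in H_i$ is the same whether computed in $S_n$ or in the symmetric group on $\{1,\dots,n\}\setminus\{i\}$ is a point the paper glosses over entirely, so your write-up is if anything slightly more careful.
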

\begin{proof}
We know that the number of permutations $\pi\in S_n$ which fixes $i\in\{1,\dots,n\}$ is $(n-1)!$. Now for each $i$, let us consider the following sets,
\begin{equation*}
\mathcal{S}_i=\{\pi\in S_n\mid\pi(i)=i\}\;\text{ and }\;\mathcal{A}_i=\{\pi\in A_n\mid\pi(i)=i\}.
\end{equation*}
Then we can define a bijection $\psi:\mathcal{A}_i\rightarrow\mathcal{S}_i\setminus\mathcal{A}_i$ by $\pi\mapsto\pi(j,k)$, for fixed $j,k\in\{1,\dots,n\}\setminus\{i\}$ such that $j\neq k$. Therefore, the cardinality of $\mathcal{A}_i$ is same as the cardinality of  $\mathcal{S}_i\setminus\mathcal{A}_i$. But we also know that the cardinality of $\mathcal{S}_i$ is $(n-1)!$. Thus $|\mathcal{S}_i|=|\mathcal{A}_i|+|\mathcal{S}_i\setminus\mathcal{A}_i|$ implies $|\mathcal{A}_i|=\frac{1}{2}(n-1)!$. 
\end{proof}
Let us define a random variable $X$ on $A_n$ by $X(\pi):=$ the number of fixed points of $\pi$, $\pi\in A_n$. Therefore, we have $X(\pi)=\chi^{\text{def}}\downarrow_{A_n}(\pi)$. 
We now find the expectation $E_{U_{A_n}}(X)$ of $X$ under $U_{A_n}$.
\begin{equation}\label{eq: E_U_{A_n}}
E_{U_{A_n}}(X)=\sum\limits_{\pi\in A_n}X(\pi)U_{A_n}(\pi)=\sum\limits_{\pi\in A_n}\chi^{\text{def}}\downarrow_{A_n}(\pi)\frac{2}{n!}
=\frac{2}{n!}\sum\limits_{\pi\in A_n}\text{ Trace}\left(\rho^{\text{def}}\downarrow_{A_n}(\pi)\right).
\end{equation}
We know that for each $i\in \{1,\dots,n\}$, the $(i,i)^{\text{th}}$ entry of the matrix $\left(\sum\limits_{\pi\in A_n}\rho^{\text{def}}\downarrow_{A_n}(\pi)\right)$ is the number of  permutations in $A_n$ which fixes $i$. Therefore, using Lemma \ref{lem: diagonal for defining action by A_n} and the linearity of the trace, expression \eqref{eq: E_U_{A_n}} becomes
$\frac{2}{n!}\sum\limits_{i=1}^{n}\frac{(n-1)!}{2}=1.$

Now we find the expectation $E_k(X)$ and variance $V_k(X)$ of the same random variable $X$ under the distribution $P^{*k}$ on $A_n$. We know, by Example 2.1.8 and Theorem 2.11.2 of \cite{Sagan}, that the defining representation on $S_n$ decomposes into the trivial representation $S^{(n)}$ and the $(n-1)$-dimensional representation $S^{(n-1,1)}$. As the irreducible representations $S^{(n)}$ and $S^{(n-1,1)}$ of $S_n$ are irreducible in $A_n$ \cite[Theorem 4.6.7]{Amri}, we have the following:
\begin{align}\label{eq: E_k1}
E_k(X)=\sum\limits_{\pi\in A_n}X(\pi)P^{*k}(\pi)&=\sum\limits_{\pi\in A_n}\chi^{\text{def}}\downarrow_{A_n}(\pi)P^{*k}(\pi)\nonumber\\
&=\sum\limits_{\pi\in A_n}P^{*k}(\pi)\text{ Trace}\left(\rho^{\text{def}}\downarrow_{A_n}(\pi)\right).
\end{align}
Now using $\left(\sum\limits_{\pi\in A_n}P(\pi)\rho^{\text{def}}\downarrow_{A_n}(\pi)\right)^k=\sum\limits_{\pi\in A_n}P^{*k}(\pi)\rho^{\text{def}}\downarrow_{A_n}(\pi)$ and linearity of the trace, expression \eqref{eq: E_k1} is equal to Trace$\left(\widehat{P}\left(\rho^{\text{def}}\downarrow_{A_n} \right)\right)^k$. Therefore from Table \ref{Table}, we have
\begin{equation}\label{eq: E_k}
E_k(X)=1+(n-3)\left(\frac{2n-5}{2n-3}\right)^k+\left(\frac{n-3}{2n-3}\right)^k\left(1+(-1)^k\right),
\end{equation}
\begin{table}
	\begin{center}
		\begin{tabular}{c|c}
			\hline\\
			$\text{Partition of } n$ & $\text{ Eigenvalues of  }\widehat{P}(R)\text{ corresponding to the partition of column }1$ \\\\ \hline\hline \\
			$(n)$ or $(1^n)$ & $1$ \text{ with algebraic multiplicity} $1$ \\\\ \hline \\ 
			& $\frac{2n-5}{2n-3}$ \text{ with algebraic multiplicity} $n-3$ \\\\
			$(n-1,1)$ or $(2,1^{n-1})$ & $-\frac{n-3}{2n-3}$ \text{ with algebraic multiplicity} $1$  \\\\ 
			& $\frac{n-3}{2n-3}$ \text{ with algebraic multiplicity} $1$ \\\\  \hline \\
			&$\frac{2n-7}{2n-3}$ \text{ with algebraic multiplicity} $\frac{(n-2)(n-5)}{2}$\\\\
			&$\frac{-1}{2n-3}$ \text{ with algebraic multiplicity} $1$\\\\
			$(n-2,2)$ or $(2^2,1^{n-4})$ & $\frac{n-3}{2n-3}$ \text{ with algebraic multiplicity} $n-3$ \\\\ 
			& $-\frac{n-3}{2n-3}$ \text{ with algebraic multiplicity} $n-3$ \\\\ \hline \\
			&$\frac{2n-7}{2n-3}$ \text{ with algebraic multiplicity} $\frac{(n-3)(n-4)}{2}$\\\\
			&$\frac{3}{2n-3}$ \text{ with algebraic multiplicity} $1$\\\\
			$(n-2,1^2)$ or $(3,1^{n-3})$ & $\frac{n-5}{2n-3}$ \text{ with algebraic multiplicity} $n-3$ \\\\ 
			& $-\frac{n-5}{2n-3}$ \text{ with algebraic multiplicity} $n-3$ \\\\ \hline 
		\end{tabular}
	\end{center}
\caption{Eigenvalues of $\widehat{P}(R)$ corresponding to some specific irreducible representations of $A_n$}\label{Table}
\end{table}
\begin{align}\label{eq: V_k1}
E_k(X^2)=\sum\limits_{\pi\in A_n}\left(X(\pi)\right)^2P^{*k}(\pi)&=\sum\limits_{\pi\in A_n}\left(\chi^{\text{def}}\downarrow_{A_n}(\pi)\right)^2P^{*k}(\pi)\nonumber\\
&=\sum\limits_{\pi\in A_n}\left(\chi^{\text{def}}\otimes\chi^{\text{def}}\right)\downarrow_{A_n}(\pi)P^{*k}(\pi).
\end{align}
Using Proposition \ref{prop:linearization of squared character}, expression \eqref{eq: V_k1} can be written as,
\begin{equation}\label{eq:LB E_k^2}
\sum\limits_{\pi\in A_n}P^{*k}(\pi)\left(2\;\;\chi^{(n)}(\pi)+3\;\;\chi^{(n-1,1)}(\pi)+\;\chi^{(n-2,2)}(\pi)+\;\chi^{(n-2,1,1)}(\pi)\right).
\end{equation}
Now if we write $\xi=\sum\limits_{\pi\in A_n}P(\pi)\;\pi$, then using the definition of character and linearity of the trace, expression \eqref{eq:LB E_k^2} equal to,
\begin{align*}
& 2\;\;\chi^{(n)}(\xi^k)+3\;\;\chi^{(n-1,1)}(\xi^k)+\;\chi^{(n-2,2)}(\xi^k)+\;\chi^{(n-2,1,1)}(\xi^k)\\
=&2+3\left((n-3)\left(\frac{2n-5}{2n-3}\right)^k+\left(\frac{n-3}{2n-3}\right)^k\left(1+(-1)^k\right)\right)\\
&\;+\left(\frac{(n-2)(n-5)}{2}\left(\frac{2n-7}{2n-3}\right)^k+\left(\frac{-1}{2n-3}\right)^k+(n-3)\left(\frac{n-3}{2n-3}\right)^k\left(1+(-1)^k\right)\right)\\
&\;+\left(\frac{(n-3)(n-4)}{2}\left(\frac{2n-7}{2n-3}\right)^k+\left(\frac{3}{2n-3}\right)^k+(n-3)\left(\frac{n-5}{2n-3}\right)^k\left(1+(-1)^k\right)\right)
\end{align*}
from Table \ref{Table}.

Therefore from the definition of variance  $V_k(X)=E_k(X^2)-\left(E_k(X)\right)^2$ we have 
\begin{align}\label{eq: V_k}
V_k(X)= \;& 1+(n-3)\left(\frac{2n-5}{2n-3}\right)^k+\left(\frac{n-3}{2n-3}\right)^k\left(1+(-1)^k\right)\\
&\;\;-\left((n-3)\left(\frac{2n-5}{2n-3}\right)^k+\left(\frac{n-3}{2n-3}\right)^k\left(1+(-1)^k\right)\right)^2\nonumber\\
&\quad+\frac{(n-2)(n-5)}{2}\left(\frac{2n-7}{2n-3}\right)^k+\left(\frac{-1}{2n-3}\right)^k+(n-3)\left(\frac{n-3}{2n-3}\right)^k\left(1+(-1)^k\right)\nonumber\\
&\quad+\frac{(n-3)(n-4)}{2}\left(\frac{2n-7}{2n-3}\right)^k+\left(\frac{3}{2n-3}\right)^k+(n-3)\left(\frac{n-5}{2n-3}\right)^k\left(1+(-1)^k\right).\nonumber
\end{align}
\begin{lem}\label{thm:appx E_k and V_k}
For $V_k(X)$ and $E_k(X)$ defined in \eqref{eq: E_k} and \eqref{eq: V_k} respectively, the following are true
\begin{enumerate}
\item For $c< 0,\;k=(n-\frac{3}{2})(\log n+c)$ and large $n$, we have
    \begin{align*}& E_k(X)\approx 1+e^{-c}\left(1+o(1)\right)\\& V_k(X)\approx 1+e^{-c}\left(1+o(1)\right).\end{align*}
\item For any $\epsilon\in(0,1),\;k_n=\lfloor(1-\epsilon)(n-\frac{3}{2})\log n\rfloor$ and large $n$, we have
    \begin{align*}& E_{k_n}(X)\approx 1+n^{\epsilon}+o(1)\\& V_{k_n}(X)\approx 1+n^{\epsilon}+o(1)+n^{\epsilon}o(1),\end{align*}
\end{enumerate}
where `$\approx$' means `asymptotic to' i.e. $a_n\approx b_n$ means $\lim\limits_{n\rightarrow\infty}\frac{a_n}{b_n}=1$.
\end{lem}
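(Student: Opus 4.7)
The plan is to plug the two choices of $k$ into the closed-form expressions~\eqref{eq: E_k} and~\eqref{eq: V_k}, classify each summand by the modulus of its underlying eigenvalue, and then extract the dominant terms after the cancellations that occur in $V_k(X) = E_k(X^{2}) - (E_k(X))^{2}$.

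The key numerical input is the standard estimate: for any fixed $\beta > 0$,
\[
\left(1 - \frac{\beta}{2n-3}\right)^{k} \;=\; \exp\!\Bigl(-\tfrac{\beta k}{2n-3} + O\bigl(k/n^{2}\bigr)\Bigr).
\]
Writing $A := \bigl((2n-5)/(2n-3)\bigr)^{k}$ and $B := \bigl((2n-7)/(2n-3)\bigr)^{k}$, a direct computation gives $B = A^{2}\bigl(1 + O((\log n)/n)\bigr)$. Under regime~(1) this yields $A = n^{-1}e^{-c}(1 + o(1))$ and $B = n^{-2}e^{-2c}(1 + o(1))$; under regime~(2) it yields $A = n^{\epsilon-1}(1 + o(1))$ and $B = n^{2\epsilon-2}(1 + o(1))$. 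Every other ratio appearing in~\eqref{eq: E_k} and~\eqref{eq: V_k}, namely $\pm(n-3)/(2n-3)$, $\pm(n-5)/(2n-3)$, $\pm 1/(2n-3)$ and $3/(2n-3)$, has absolute value strictly less than $1/2$, so after raising to the $k$-th power and multiplying by the polynomial prefactor ($n-3$ or $(n-3)(n-4)/2$) the contribution is $O(n^{2}\cdot 2^{-k}) = o(1)$ in both regimes.

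From~\eqref{eq: E_k} I then read off $E_k(X) = 1 + (n-3)A + o(1)$. For the variance, substituting into~\eqref{eq: V_k} and collecting powers of $A$ and $B$, the surviving terms simplify to
\[
V_k(X) \;=\; 1 + (n-3)A \;+\; (n^{2} - 7n + 11)\,B \;-\; (n-3)^{2} A^{2} \;+\; o(1).
\]
The two middle contributions can be rewritten as $(n-3)^{2}(B - A^{2}) + (-n+2)B$. Using $B - A^{2} = A^{2}\cdot O((\log n)/n)$ and $(-n+2)B = O(nB)$, both correction terms are bounded in modulus by $(n-3)^{2} A^{2} \cdot O((\log n)/n)$, which is $O((\log n)/n) = o(1)$ in regime~(1) and $O(n^{2\epsilon - 1}\log n) = n^{\epsilon}\cdot o(1)$ in regime~(2). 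Substituting the values of $(n-3)A$ in each regime then yields the four claimed asymptotics.

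The main subtlety is the cancellation of the $(n-3)^{2} A^{2}$ contributions between $E_k(X^{2})$ and $(E_k(X))^{2}$: without this, one would erroneously obtain a term of size $n^{2\epsilon}$ in regime~(2), which dwarfs $n^{\epsilon}$. The mechanism behind the cancellation is the identity $B = A^{2}\bigl(1 + O((\log n)/n)\bigr)$, reflecting the fact that the eigenvalue $(2n-7)/(2n-3)$ is asymptotically the square of $(2n-5)/(2n-3)$. Tracking this $O((\log n)/n)$ correction carefully is precisely what reduces the residual error from a potential $n^{2\epsilon}\cdot O(1)$ down to the stated $n^{\epsilon}\cdot o(1)$.
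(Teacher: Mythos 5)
Your proposal is correct and follows essentially the same route as the paper: substitute the two choices of $k$ into \eqref{eq: E_k} and \eqref{eq: V_k}, discard the exponentially small terms coming from the eigenvalues of modulus below $\tfrac12$, and rely on the near-cancellation between the order-$n^2$ coefficient of $B=\left(\frac{2n-7}{2n-3}\right)^k$ and the $(n-3)^2A^2$ term arising from $(E_k(X))^2$. The paper carries out this cancellation after replacing both $B$ and $A^2$ by $e^{-2k/(n-1.5)}$ and leaves the remainder to ``straightforward calculations''; your explicit estimate $B=A^2\bigl(1+O((\log n)/n)\bigr)$ makes that step rigorous, but it is the same argument.
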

\begin{proof}
Throughout this proof we denote $d_k=\left(\frac{1+(-1)^k}{2^k}\right)$ for convenience. Note that $0\leq d_k< 1$ for $k\geq 1$. First, we have
\begin{equation}\label{eq: E_k appx in terms of k}
\begin{split}
E_k(X)&=1+(n-3)\left(1-\frac{1}{n-1.5}\right)^k+\left(1-\frac{1.5}{n-1.5}\right)^k\left(\frac{1+(-1)^k}{2^k}\right)\\
&\approx 1+(n-3)e^{-\frac{k}{n-1.5}}+d_ke^{-\frac{1.5k}{n-1.5}}.
\end{split}
\end{equation}
Now from \eqref{eq: V_k}, we have
\begin{align}\label{eq: V_k appx in terms of k}
V_k(X)\approx\;& 1+(n-3)e^{-\frac{k}{n-1.5}}+d_ke^{-\frac{1.5k}{n-1.5}}-\left((n-3)e^{-\frac{k}{n-1.5}}+d_ke^{-\frac{1.5k}{n-1.5}}\right)^2\\
&\quad\quad\quad\quad+\frac{(n-2)(n-5)}{2}e^{-\frac{2k}{n-1.5}}+\left(\frac{-1}{2n-3}\right)^k+(n-3)d_ke^{-\frac{1.5k}{n-1.5}}\nonumber\\
&\quad\quad\quad\quad+\frac{(n-3)(n-4)}{2}e^{-\frac{2k}{n-1.5}}+\left(\frac{3}{2n-3}\right)^k+(n-3)d_ke^{-\frac{3.5k}{n-1.5}}\nonumber\\
=\;& 1+(n-3)e^{-\frac{k}{n-1.5}}+d_ke^{-\frac{1.5k}{n-1.5}}-(n-2)e^{-\frac{2k}{n-1.5}}-2d_k(n-3)e^{-\frac{2.5k}{n-1.5}}\nonumber\\
&\;\;-d_k^2e^{-\frac{3k}{n-1.5}}+\left(\frac{-1}{2n-3}\right)^k+(n-3)d_ke^{-\frac{1.5k}{n-1.5}}+\left(\frac{3}{2n-3}\right)^k+(n-3)d_ke^{-\frac{3.5k}{n-1.5}}.\nonumber
\end{align}
Now for large $n$, if we take $k=(n-\frac{3}{2})(\log n+c),\;c< 0$, then we have $k\geq 1$ and hence $d_k$ is bounded above. Therefore from \eqref{eq: E_k appx in terms of k}, we have $E_k(X)\approx1+e^{-c}\left(1+o(1)\right)$ and from \eqref{eq: V_k appx in terms of k}, we have $V_k(X)\approx1+e^{-c}\left(1+o(1)\right)$ by straightforward calculations. This proves the first part.

Now for any $\epsilon\in(0,1),\;k_n=\lfloor(1-\epsilon)(n-\frac{3}{2})\log n\rfloor$ and we have $k_n\geq 1$ for large $n$. Hence $d_{k_n}$ is bounded above. Therefore from \eqref{eq: E_k appx in terms of k}, we have $E_{k_n}(X)\approx1+n^{\epsilon}+o(1)$ and from \eqref{eq: V_k appx in terms of k}, we have $V_{k_n}(X)\approx1+n^{\epsilon}+o(1)+n^{\epsilon}o(1)$ by straightforward calculations. This proves the second part.
\end{proof}
\begin{thm}\label{thm:LB for Alt}
We have the following:
\begin{enumerate}
\item For large $n,\;||P^{*k}-U_{A_n}||_{\text{TV}}\geq1-\frac{6}{1+e^{-c}(1+o(1))}$, when $k=(n-\frac{3}{2})(\log n+c)$ and $c\ll0$.
\item $\lim\limits_{n\rightarrow\infty}||P^{*k_n}-U_{A_n}||_{\text{TV}}=1$, for any $\epsilon\in (0,1)$ and $k_n=\lfloor(1-\epsilon)(n-\frac{3}{2})\log n\rfloor.$
\end{enumerate}
\end{thm}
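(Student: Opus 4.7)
The plan is to apply the classical second moment (Chebyshev) method for mixing time lower bounds, using the statistic $X(\pi)$ = number of fixed points of $\pi$, already introduced before the theorem. This statistic is the natural choice because $X = \chi^{\text{def}}\downarrow_{A_n}$ and the defining representation contains the slow $(n-1)$-dimensional irreducible of $A_n$, whose dominant eigenvalue $\tfrac{2n-5}{2n-3}$ governs the cutoff.

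First I would record $E_{U_{A_n}}(X) = 1$ (already computed in the excerpt) and $V_{U_{A_n}}(X) = 1$. The second identity follows from an analogue of Lemma \ref{lem: diagonal for defining action by A_n} applied to pairs: the number of $\pi \in A_n$ fixing two distinct points $i,j$ is $\tfrac{1}{2}(n-2)!$, so $E_{U_{A_n}}(X(X-1)) = n(n-1)\cdot\frac{(n-2)!/2}{n!/2}=1$, giving $E_{U_{A_n}}(X^2)=2$ and hence $V_{U_{A_n}}(X)=1$.

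Next I would introduce a threshold set $A_t = \{\pi \in A_n : X(\pi) \geq t\}$, with $1 < t < E_k(X)$, and apply Chebyshev's inequality to both distributions to get
\[
\lVert P^{*k} - U_{A_n}\rVert_{\text{TV}} \;\geq\; P^{*k}(A_t) - U_{A_n}(A_t) \;\geq\; 1 - \frac{V_k(X)}{(E_k(X)-t)^2} - \frac{V_{U_{A_n}}(X)}{(t-1)^2}.
\]
For part (1), taking $k = (n-\tfrac{3}{2})(\log n + c)$ with $c\ll 0$, Lemma \ref{thm:appx E_k and V_k}(1) gives $E_k(X) - 1 \sim e^{-c}(1+o(1))$ and $V_k(X) \sim 1 + e^{-c}(1+o(1))$. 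Choosing $t$ of the form $1 + \alpha e^{-c}$ for an appropriate $\alpha \in (0,1)$ and plugging the asymptotics in, the two Chebyshev error terms combine into a single ratio of order $\tfrac{1}{1+e^{-c}(1+o(1))}$, and one verifies that the constant can be absorbed into $6$, yielding the claimed lower bound.

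For part (2), the same argument is applied with $k_n = \lfloor(1-\epsilon)(n-\tfrac{3}{2})\log n\rfloor$. By Lemma \ref{thm:appx E_k and V_k}(2) one has $E_{k_n}(X) - 1 \sim n^{\epsilon}$ and $V_{k_n}(X) = O(n^{\epsilon})$. Taking $t = 1 + \tfrac{1}{2}n^{\epsilon}$ makes the first Chebyshev term $O(n^{\epsilon})/(n^{\epsilon}/2)^2 = O(n^{-\epsilon}) \to 0$ and the second $1/(n^{\epsilon}/2)^2 = O(n^{-2\epsilon}) \to 0$, so $\lVert P^{*k_n} - U_{A_n}\rVert_{\text{TV}} \to 1$. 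The main obstacle is tuning the threshold $t$ in part (1) so that the combined bound matches the specific form $1 - \tfrac{6}{1+e^{-c}(1+o(1))}$; this amounts to carefully tracking the leading term of $V_k(X)$ relative to $(E_k(X)-1)^2$ and choosing the split between the two Chebyshev contributions optimally, while keeping the small $V_{U_{A_n}}$ term under control.
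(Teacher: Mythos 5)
Your proposal is correct and follows essentially the same route as the paper: the fixed-point statistic $X=\chi^{\text{def}}\downarrow_{A_n}$, the asymptotics of $E_k(X)$ and $V_k(X)$ from Lemma \ref{thm:appx E_k and V_k}, and a second-moment threshold argument separating $P^{*k}$ from $U_{A_n}$. The only real difference is on the uniform side, where the paper uses Markov's inequality with $E_{U_{A_n}}(X)=1$ (so it never needs the variance under the uniform measure, and takes the specific threshold $E_k(X)/2$ to land exactly on $1-\frac{6}{1+e^{-c}(1+o(1))}$), whereas you use Chebyshev with $V_{U_{A_n}}(X)=1$ computed from the count of even permutations fixing two points; your version gives a bound at least as strong as the stated one, so both work.
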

\begin{proof} 
For any positive constant $a$, by Chebychev's inequality, we have
\begin{equation}\label{eq: LB CSV}
P^{*k}\left(\{\pi\in A_n: |X(\pi)-E_k(X)|\leq a\sqrt{V_k(X)} \}\right)\geq1-\frac{1}{a^2}.
\end{equation}
Now we choose positive constant $a$ such that $E_k(X)-a\sqrt{V_k(X)}>0$. Then by Markov's inequality we have,
\begin{align}\label{eq: LB MRKV}
U_{A_n}\left(\{\pi\in A_n: X(\pi)\geq E_k(X)-a\sqrt{V_k(X)} \}\right) & \leq  \frac{E_{U_{A_n}}(X)}{E_k(X)-a\sqrt{V_k(X)}}\\
&=\frac{1}{E_k(X)-a\sqrt{V_k(X)}}\nonumber.
\end{align}
Now from the definition of total variation distance, we have
\begin{align}\label{eq:LB}
||P^{*k}-U_{A_n}||_{\text{TV}}&=\sup_{A\subset A_n}|P^{*k}(A)-U_{A_n}(A)|\nonumber\\
&\geq P^{*k}\left(\{\pi\in A_n: |X(\pi)-E_k(X)|\leq a\sqrt{V_k(X)} \}\right)\nonumber\\
&\quad-U_{A_n}\left(\{\pi\in A_n: |X(\pi)-E_k(X)|\leq a\sqrt{V_k(X)} \}\right)\nonumber\\
&\geq P^{*k}\left(\{\pi\in A_n: |X(\pi)-E_k(X)|\leq a\sqrt{V_k(X)} \}\right)\nonumber\\
&\quad-U_{A_n}\left(\{\pi\in A_n: X(\pi)\geq E_k(X)-a\sqrt{V_k(X)} \}\right)\nonumber\\
&\geq 1-\frac{1}{a^2}-\frac{1}{E_k(X)-a\sqrt{V_k(X)}}.
\end{align}
The inequality \eqref{eq:LB} follows by using \eqref{eq: LB CSV} and \eqref{eq: LB MRKV}. In particular, if we take $a=\frac{E_k(X)}{2\sqrt{V_k(X)}}>0$ in the above inequality, we get
\begin{equation}\label{eq:LB main ineq}
||P^{*k}-U_{A_n}||_{\text{TV}}\geq 1-\frac{4V_k(X)}{(E_k(X))^2}-\frac{2}{E_k(X)}.
\end{equation}
Now if $n$ is large, $c\ll0$ and $k=(n-\frac{3}{2})(\log n+c)$, then by \eqref{eq:LB main ineq} and by the first part of Lemma \ref{thm:appx E_k and V_k}, we have the first part of this theorem.

Again for any $\epsilon\in (0,1)$ and $k_n=\lfloor(1-\epsilon)(n-\frac{3}{2})\log n\rfloor$ from \eqref{eq:LB main ineq} and by the second part of Lemma \ref{thm:appx E_k and V_k}, we have the following:
\begin{equation}\label{eq:LB cutoff limit}
1\geq ||P^{*k_n}-U_{A_n}||_{\text{TV}}\geq  1-\frac{4(1+n^{\epsilon}+o(1)+n^{\epsilon}o(1))}{(1+n^{\epsilon}+o(1))^2}-\frac{2}{1+n^{\epsilon}+o(1)},
\end{equation}
for large $n$. Therefore, the second part of this theorem follows from \eqref{eq:LB cutoff limit} and the following:
\begin{align*}
&\lim\limits_{n\rightarrow\infty}\frac{4(1+n^{\epsilon}+o(1)+n^{\epsilon}o(1))}{(1+n^{\epsilon}+o(1))^2}=0,\\
&\lim\limits_{n\rightarrow\infty}\frac{2}{1+n^{\epsilon}+o(1)}=0.
\end{align*}
\end{proof}
\begin{cor}[Total variation cutoff for transpose top-$2$ with random shuffle]
The transpose top-$2$ with random shuffle on $A_n$ driven by $P\;$ 
exhibits the total variation cutoff phenomenon and the cutoff is at $(n-\frac{3}{2})\log n$.
\end{cor}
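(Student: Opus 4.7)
The proof is essentially an assembly of the two main theorems already established in the paper, checking the three conditions in the definition of the total variation cutoff phenomenon for the sequence $\tau_n = (n-\tfrac{3}{2})\log n$. The plan is to verify each bullet of the definition in turn, invoking Theorem \ref{thm:UB for Alt} for the upper-direction condition and Theorem \ref{thm:LB for Alt} for the lower-direction condition, with the divergence of $\tau_n$ being immediate.

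First I would note that $\tau_n = (n-\tfrac{3}{2})\log n \to \infty$ as $n \to \infty$, which gives condition (1) of the cutoff definition trivially. This requires no argument beyond observing that both factors tend to infinity.

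Next, for condition (2), let $\epsilon \in (0,1)$ and set $k_n = \lfloor (1+\epsilon)(n-\tfrac{3}{2})\log n \rfloor$. Then the second part of Theorem \ref{thm:UB for Alt} directly yields $\lim_{n\to\infty} \|P^{*k_n} - U_{A_n}\|_{\text{TV}} = 0$. For condition (3), let $\epsilon \in (0,1)$ and set $k_n = \lfloor (1-\epsilon)(n-\tfrac{3}{2})\log n \rfloor$. Then the second part of Theorem \ref{thm:LB for Alt} yields $\lim_{n\to\infty} \|P^{*k_n} - U_{A_n}\|_{\text{TV}} = 1$. Combining these three observations with Proposition \ref{irreducibility and aperiodicity} (which guarantees that the walk is irreducible and aperiodic, a standing hypothesis in the definition of cutoff) produces the claim that $\{(A_n, P)\}$ exhibits total variation cutoff at time $(n-\tfrac{3}{2})\log n$.

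There is essentially no obstacle in this step: the real work has already been done in Theorems \ref{thm:UB for Alt} and \ref{thm:LB for Alt}, whose proofs occupy Sections \ref{sec:upper bound} and \ref{sec:lower bound}. The only thing to be careful about is matching the parametrization: the upper bound statement is phrased with $k \geq (n-\tfrac{3}{2})(\log n + c)$ with $c > 0$, and the lower bound with $k = (n-\tfrac{3}{2})(\log n + c)$ with $c \ll 0$, while the cutoff definition is phrased in terms of $(1\pm\epsilon)\tau_n$. Both window formulations are equivalent for the purpose of establishing cutoff at $\tau_n$, and the $(1\pm\epsilon)$ forms are exactly what Theorems \ref{thm:UB for Alt}(2) and \ref{thm:LB for Alt}(2) address, so the match is direct and no additional estimate is needed.
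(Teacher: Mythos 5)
Your proposal is correct and matches the paper's own argument, which likewise deduces the corollary from the second parts of Theorems \ref{thm:UB for Alt} and \ref{thm:LB for Alt} (the divergence of $\tau_n$ and the irreducibility/aperiodicity from Proposition \ref{irreducibility and aperiodicity} being immediate). Your extra remark about matching the $(\log n + c)$ and $(1\pm\epsilon)\tau_n$ parametrizations is a reasonable clarification but not needed, since the $(1\pm\epsilon)$ statements are exactly what the cited theorem parts provide.
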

\begin{proof}
The proof follows from the second part of the Theorems \ref{thm:UB for Alt} and \ref{thm:LB for Alt}.
\end{proof}
For example, if $n=10$, the plot for $||P^{*k}-U_{A_{10}}||_{\text{TV}}$ vs. $k$ is given in Figure \ref{Figure: Fig}. In this case the cutoff is at $8.5\times \log 10 = 19.572$ and Figure \ref{Figure: Fig} confirms this.
\begin{figure}[h]
	\centering
	\includegraphics{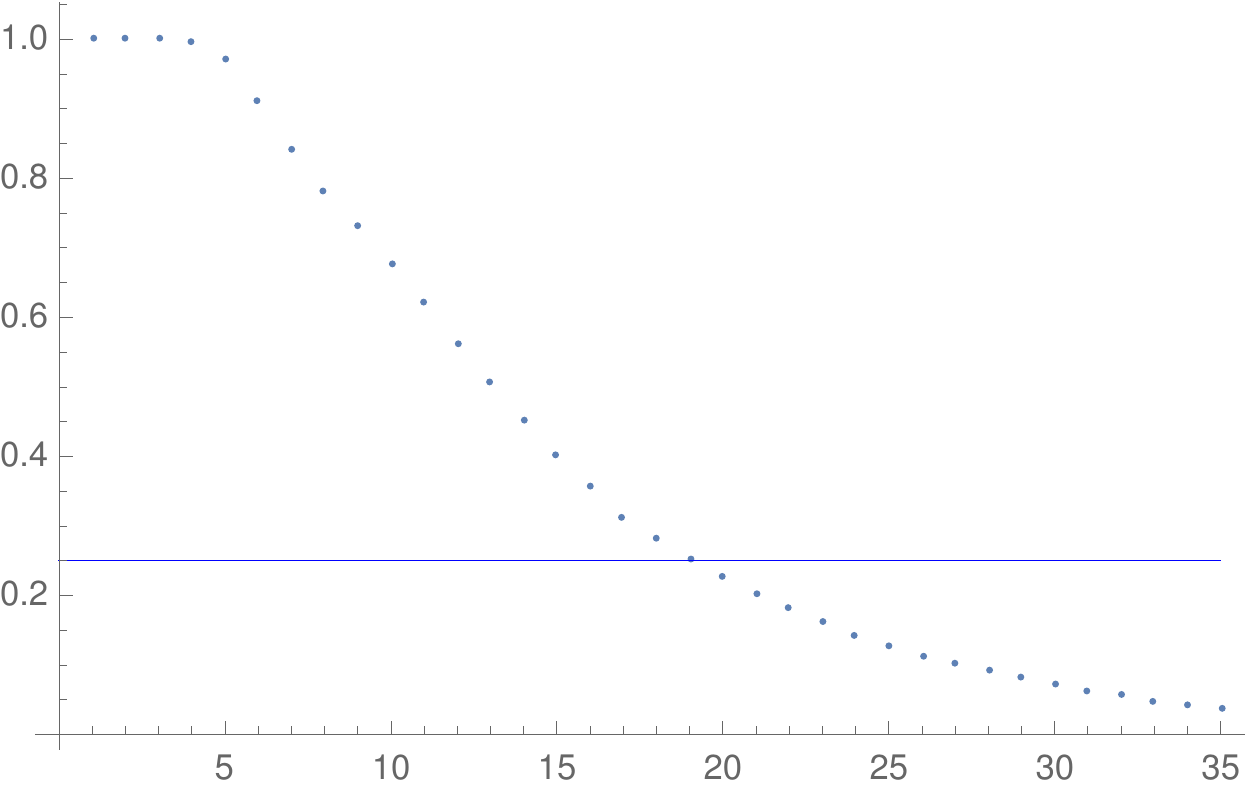}
	\caption{ Plot for $||P^{*k}-U_{A_{10}}||_{\text{TV}}$ vs. $k$ }
	\label{Figure: Fig}
\end{figure}


\begin{thebibliography}{99}
\bibitem {FOW} Flatto, L., Odlyzko, A. M., Wales, D. B. \textit{Random Shuffles and Group Representations.}, The Annals of Probability, Vol. 13, No. 1 (Feb. 1985), 154-178.
\bibitem{Diaconis Note1} Diaconis, P. \textit{Application of Non-Commutative Fourier Analysis to Probability Problems.} Technical Report No. 275, June 1987, Prepared under the Auspices of National Science Foundation Grant DMS86-00235.
\bibitem{Diaconis Note2} Diaconis, P. \textit{Group Representations in Probability and Statistics.} Institute of Mathematical Statistics Lecture Notes-Monograph Series, Volume 11. Institute of Mathematical Statistics, Hayward, CA, 1988.
\bibitem{Saloff-Coste Note} Saloff-Coste, L. \textit{Random walks on finite groups.} Probability on discrete structures (H. Kesten, Ed.), 261-346, Encyclopedia Math. Sci., 110. Springer, Berlin, 2004.
\bibitem {DS} Diaconis, P. and Shahshahani, M. \textit{Generating a random permutation with random transpositions.} Z. Wahrsch. Verw. Gebiete 57, no. 2, 159-179, 1981.
\bibitem {LP} Lulov, N., Pak, I.  \textit{Rapidly mixing random walks and bounds on characters of the symmetric group}, Journal of Algebraic Combinatorics (2002).
\bibitem {RS} Roussel, S.  \textit{Ph\'enom\`ene de cutoff pour certaines marches al\'eatoires sur le groupe sym\'etrique}, Colloquium Math. 86, 111-135 (2000).
\bibitem {VO} Vershik, A. M., Okounkov, A. Yu. \textit{A New Approach to the Representation Theory of the Symmetric Groups. II}, arXiv:math.RT/0503040 (2005).
\bibitem {LPW} Levin, D. A., Peres, Y. and Wilmer, E. L.  \textit{Markov Chains and Mixing Times}, American Mathematical Society..
\bibitem {Sagan} Sagan, B. E. \textit{The Symmetric Group: Representations, Combinatorial Algorithms and Symmetric Functions}, New York: Springer 2001.
\bibitem {Serre} Serre, J. P. \textit{Linear Representations of Finite Groups}, New York: Springer 1977.
\bibitem{Amri} Prasad, A. \textit{Representation Theory A Combinatorial Viewpoint}, Cambridge University Press: 2015. 
\bibitem{Cut-off Diaconis} Diaconis, P. \textit{The cutoff phenomenon in finite Markov chains.} Proc. Natl. Acad. Sci. USA, Vol. 93, pp. 1659-1664, February 1996, Mathematics.
\bibitem {Ruff} Ruff, O. \textit{Weight Theory for Alternating Groups.} Algebra Colloquium 15: 3 (2008), 391-404.
\bibitem{JK} James, G. and Kerber, A. \textit{The representation theory of the symmetric group.} Encyclopedia of Mathematics and its Applications, 16. Addison-Wesley Publishing Co., Reading, Mass., 1981.
\end{thebibliography}
\end{document}